\tikzstyle{vertex}=[circle, draw, inner sep=0pt, minimum size=4pt]
\newcommand{\vertex}{\node[vertex]}
\definecolor{Cerulean}{cmyk}{0.94,0.11,0,0}
\definecolor{ForestGreen}{cmyk}{0.91,0,0.88,0.12}
\definecolor{RedViolet}{cmyk}{0.07,0.90,0,0.34}
\newtheorem{theorem}{Theorem}[section]
\newtheorem{lemma}[theorem]{Lemma}
\newtheorem{proposition}[theorem]{Proposition}
\newtheorem{corollary}[theorem]{Corollary}
\theoremstyle{definition}
\newtheorem{definition}[theorem]{Definition}
\newtheorem{example}[theorem]{Example}
\newtheorem{remark}[theorem]{Remark}
\newcommand\ba{\mathbf{a}}
\newcommand\bb{\mathbf{b}}
\newcommand\be{\mathbf{e}}
\newcommand\bp{\mathbf{p}}
\newcommand\bv{\mathbf{v}}
\newcommand\Z{\mathbb{Z}}
\newcommand\R{\mathbb{R}}
\newcommand\calF{\mathcal{F}}
\newcommand\calM{\mathcal{M}}
\newcommand\calP{\mathcal{P}}
\newcommand\calT{\mathcal{T}}
\newcommand\calV{\mathcal{V}}
\newcommand{\scrL}{\mathscr{L}}
\newcommand\inedge{\mathrm{in}}
\newcommand\outedge{\mathrm{out}}
\newcommand\In{\mathrm{In}}
\newcommand\Out{\mathrm{Out}}
\newcommand{\DKK}{\mathrm{DKK}}
\newcommand{\flow}{f}
\newcommand{\cliques}{\mathcal{C}}
\newcommand{\cliquevs}{\mathcal{V}}
\newcommand{\cov}{\mathrm{cov}}
\newcommand{\floor}[1]{\left\lfloor#1\right\rfloor}
\DeclareMathOperator{\ccw}{ccw}
\DeclareMathOperator{\cw}{cw}
\DeclareMathOperator{\rot}{rot}
\DeclareMathOperator{\vol}{Vol}
\title{Flow polytopes for extensions of bipartite graphs}
\author[Braun]{Benjamin Braun}
\address{Department of Mathematics\\
         University of Kentucky\\
\url{https://sites.google.com/view/braunmath/}}
\email{benjamin.braun@uky.edu}
\author[Bruegge]{Kaitlin Bruegge}
\address{Department of Mathematics\\
         University of Cincinnati\\
\url{https://researchdirectory.uc.edu/p/brueggkn}}
\email{brueggkn@ucmail.uc.edu}
\author[Davis]{Robert Davis}
\address{Department of Mathematics\\
        Colgate University
\url{https://sites.google.com/site/rtda223}}
\email{rdavis@colgate.edu}
\author[Hanely]{Derek Hanely}
\address{Department of Mathematics\\
         Penn State Behrend\\
\url{https://behrend.psu.edu/person/derek-hanely}}
\email{derek.hanely@psu.edu}
\date{16 October 2025}
\thanks{The authors thank Alejandro Morales and Martha Yip for helpful conversations.}
\begin{document}

\begin{abstract}
The space of unit flows on a finite acyclic directed graph is a lattice polytope called the flow polytope of the graph.
Given a bipartite graph $G$ with minimum degree at least two, we construct two associated acyclic directed graphs: the extension of $G$ and the almost-degree-whiskered graph of $G$.
We prove that the normalized volume of the flow polytope for the extension of $G$ is equal to the number of matchings in the almost-degree-whiskered graph of $G$.
Further, we refine this result by proving that the Ehrhart $h^*$-polynomial of the flow polytope for the extension of $G$ is equal to the unsigned matching polynomial of the almost-degree-whiskered graph of $G$.
\end{abstract}

\maketitle

\section{Introduction and main results}

\subsection{Motivation}

Flow polytopes have been the subject of intense study for decades due to their importance in optimization, geometric combinatorics, algebraic geometry, and other areas.
A significant amount of research in recent years has been devoted to combinatorial properties of flow polytopes~\cite{baldoni-vergne,vonbell2024triangulations,framedtriangulations,caracolvolume,combflowmodel,braun2024equatorialflowtriangulationsgorenstein,braunmcelroyfullvolumes,DKK,generalpitmanstanley,cyclicorderflow,integerpointsflow,lidskii,gtflow,flowdiagonalharmonics,meszaros-morales,meszaros-morales-striker,morrisidentityflow,rietsch2024rootpolytopesflowpolytopes}.
A perennial problem regarding flow polytopes is to compute and understand their volumes, where in this work we consider the \emph{normalized volume} of a flow polytope, which is the (relative) Euclidean volume times the factorial of the dimension of the polytope.
It is well-known that the normalized volume of a lattice polytope $P$ is a positive integer, and a well-known refinement of the normalized volume of a lattice polytope is the Ehrhart $h^*$-polynomial of $P$.

The Lidskii formula~\cite{baldoni-vergne,lidskii,meszaros-morales} is arguably the most important formula expressing the volume of a flow polytope with arbitrary netflow vector, but the Lidskii formula is a sum over integer compositions of products of multinomial coefficients and evaluations of Kostant partition functions for various graphs.
Thus, while this is a beautiful and useful formula, it remains mysterious in practice.
In the case of the flow polytopes considered in this paper, the Lidskii formula collapses to a single summand, and gives the volume as the number of integer points in an associated flow polytope defined by a different netflow vector.
In some cases this makes the determination of the volume easier, but not always.
While there are some lovely combinatorial models known for computing flow polytope volumes~\cite{combflowmodel}, there remains a need to better understand the behavior of these volumes, especially as the volume relates to the combinatorics of the underlying DAG.

Motivated by this goal, given a bipartite graph $H$ with minimum degree at least two, we construct two associated acyclic directed graphs: the extension $G(H)$ and the almost-degree-whiskered graph $W(H)$.
In this work, we prove that the normalized volume of the flow polytope $\calF_1(G(H))$ is equal to the number of matchings in $W(H)$.
Further, we refine this result by proving that the Ehrhart $h^*$-polynomial of $\calF_1(G(H))$ is equal to the unsigned matching polynomial of $W(H)$.

\subsection{Main results}

Let \(H=(V,E)\) be a finite directed acyclic graph, or DAG for short, with vertex set $V$ and directed edge set $E$.
We allow \(H\) to have multiple edges.
For each $v\in V(H)$, let $\mathrm{in}(v)$ denote the incoming edges of $v$ and let $\mathrm{out}(v)$ denote the outgoing edges of $v$. 
The vertex $v$ is called a \emph{source} if $\mathrm{in}(v) =\emptyset$, and it is called a \emph{sink} if $\mathrm{out}(v) = \emptyset$. 
Any other vertices are called \emph{inner vertices}. 
Let \(S(H)\) denote the set of sources in \(H\) and let \(T(H)\) denote the set of sinks in \(H\).

\begin{definition}\label{def:extension}
Given a DAG \(H\), we denote by \(G=G(H)\) the DAG obtained by adding new vertices \(s\) and \(t\) and a pair of directed edges from \(s\) to \(i\) for each \(i \in S(H)\) as well a pair of directed edges from \(j\) to \(t\) for each \(j \in T(H)\).
We call \(G(H)\) the \emph{extension} of \(H\).
\end{definition}

When $H$ is a bipartite graph with shores $S(H)$ and $T(H)$, we consider $H$ to be a DAG with all edges directed from $S(H)$ to $T(H)$.
See Figure~\ref{fig:G(H)labels} for an example of an extension graph in the bipartite case.
We will use the following notational conventions for extensions of bipartite graphs.

\begin{definition}\label{def:bipextension}
For a bipartite graph $H$ having no vertices of degree zero or one and having bipartition $V(H)=S(H) \uplus T(H)$, in $G(H)$ we label the edges from the source \(s\in G(H)\) to \(i\in S(H)\) as \(\alpha_{1,i}\) and \(\alpha_{2,i}\), any edge in \(H\) from \(i\) to \(j\) as \(\beta_{i,j}\), and the edges from \(j\in T(H)\) to the sink \(t\in G(H)\) as \(\gamma_{j,1}\) and \(\gamma_{j,2}\).
\end{definition}

\begin{example}\label{ex:G(H)}
Let $H=K_{3,2}$ be the complete bipartite DAG on vertex set $\{1,2,3,4,5\}$ shown in Figure~\ref{fig:G(H)labels}.
The extension $G(K_{3,2})$ is also shown in Figure~\ref{fig:G(H)labels}.
Note that by our naming convention, in $G(K_{3,2})$ the two edges from $s$ to $2$ are referred to as $\alpha_{1,2}$ and $\alpha_{2,2}$, the edge from $1$ to $4$ is referred to as $\beta_{1,4}$, and the edges from $5$ to $t$ are $\gamma_{5,1}$ and $\gamma_{5,2}$.
\end{example}

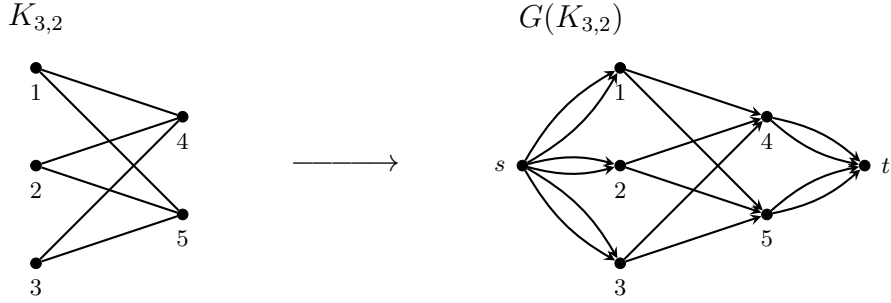
\begin{figure}[t]
\begin{center}
\begin{tikzpicture}[scale=1.3]
\begin{scope}
\node at (0,2.5) {$K_{3,2}$};
\vertex[fill,label=below:\scriptsize{$1$}](a1) at (0,2) {};\vertex[fill,label=below:\scriptsize{$2$}](a2) at (0,1) {};
\vertex[fill,label=below:\scriptsize{$3$}](a3) at (0,0) {};
\vertex[fill,label=below:\scriptsize{$4$}](a4) at (1.5,1.5) {};
\vertex[fill,label=below:\scriptsize{$5$}](a5) at (1.5,0.5) {};

\draw[thick] (a1)--(a4);
\draw[thick] (a1)--(a5);
\draw[thick] (a2)--(a4);
\draw[thick] (a2)--(a5);
\draw[thick] (a3)--(a4);
\draw[thick] (a3)--(a5);
\end{scope}
\begin{scope}[xshift=90]
\node at (0,1) {$\relbar\joinrel\relbar\joinrel\relbar\joinrel\relbar\joinrel\rightarrow$};
\end{scope}
\begin{scope}[xshift=170]
\node at (-.5,2.5) {$G(K_{3,2})$};
\vertex[fill,label=below:\scriptsize{$1$}](a1) at (0,2) {};\vertex[fill,label=below:\scriptsize{$2$}](a2) at (0,1) {};
\vertex[fill,label=below:\scriptsize{$3$}](a3) at (0,0) {};
\vertex[fill,label=below:\scriptsize{$4$}](a4) at (1.5,1.5) {};
\vertex[fill,label=below:\scriptsize{$5$}](a5) at (1.5,0.5) {};
\vertex[fill,label=left:\scriptsize{$s$}](s) at (-1,1) {};
\vertex[fill,label=right:\scriptsize{$t$}](t) at (2.5,1) {};

\draw[-stealth, thick] (a1)--(a4);
\draw[-stealth, thick] (a1)--(a5);
\draw[-stealth, thick] (a2)--(a4);
\draw[-stealth, thick] (a2)--(a5);
\draw[-stealth, thick] (a3)--(a4);
\draw[-stealth, thick] (a3)--(a5);

\draw[-stealth, thick] (s) to[bend left=15] (a1);
\draw[-stealth, thick] (s) to[bend right=15] (a1);
\draw[-stealth, thick] (s) to[bend left=15] (a2);
\draw[-stealth, thick] (s) to[bend right=15] (a2);
\draw[-stealth, thick] (s) to[bend left=15] (a3);
\draw[-stealth, thick] (s) to[bend right=15] (a3);
\draw[-stealth, thick] (a4) to[bend left=15] (t);
\draw[-stealth, thick] (a4) to[bend right=15] (t);
\draw[-stealth, thick] (a5) to[bend left=15] (t);
\draw[-stealth, thick] (a5) to[bend right=15] (t);
\end{scope}
\end{tikzpicture}
\end{center}
\caption{The graph extension for $K_{3,2}$.}
\label{fig:G(H)labels}
\end{figure}

We next define the operation of whiskering a graph, which is a special case of a generalization of the corona operation introduced by Frucht and Harary~\cite{fruchthararycorona}.
Recall that the \emph{join} of a vertex $v$ with a graph $H$, denoted $v\vee H$, is obtained by adding an edge between every vertex in $H$ and $v$.

\begin{definition}
Let $G$ be a finite simple graph and $\mathcal{H}:=\{H_v:v\in G\}$ a family of finite simple graphs.
The \emph{$\mathcal{H}$-corona of $G$}, denoted $G\odot \mathcal{H}$, is
\[
    G\odot \mathcal{H} := G \cup \left(\bigcup_{v \in V(G)} v \vee H_v\right).
\]
\end{definition}

\begin{example}\label{ex: H-corona}
    Let $G$ be the path on $\{1,2,3\}$ where $\deg(2) = 2$, and suppose $\mathcal{H} = \{H_1,H_2,H_3\}$ where $H_i = K_i$. 
    Then $G \odot \mathcal{H}$ is the graph illustrated in Figure~\ref{fig: H-corona}.
\end{example}

\begin{figure}[ht]
    \begin{center}
\begin{tikzpicture}[scale=2]
\begin{scope}
\vertex[fill,label=above:\scriptsize{$1$}](a1) at (0,0) {};\vertex[fill,label=above:\scriptsize{$2$}](a2) at (1,0) {};
\vertex[fill,label=above:\scriptsize{$3$}](a3) at (2,0) {};
\vertex[fill](a11) at (0,-1) {};
\vertex[fill](a21) at (0.7,-1) {};
\vertex[fill](a22) at (1.3,-1) {};
\vertex[fill](a31) at (1.7,-1) {};
\vertex[fill](a32) at (2.3,-1) {};
\vertex[fill](a33) at (2,-0.66) {};

\draw[thick] (a1)--(a2);
\draw[thick] (a1)--(a11);
\draw[thick] (a2)--(a3);
\draw[thick] (a2)--(a21);
\draw[thick] (a2)--(a22);
\draw[thick] (a21)--(a22);
\draw[thick] (a3)--(a31);
\draw[thick] (a3)--(a32);
\draw[thick] (a3)--(a33);
\draw[thick] (a31)--(a32);
\draw[thick] (a31)--(a33);
\draw[thick] (a32)--(a33);
\end{scope}
\end{tikzpicture}
\end{center}
    \caption{The $\mathcal{H}$-corona of $G$ described in Example~\ref{ex: H-corona}.}
    \label{fig: H-corona}
\end{figure}
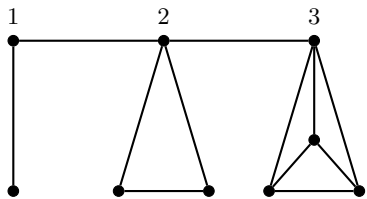

A special case of the $\mathcal{H}$-corona construction is the almost-degree-whiskering construction.

\begin{definition}
For a graph $G$, the \emph{almost-degree-whiskered graph} of $G$, denoted $W(G)$, is given by $W(G) \cong G \odot \mathcal{E}$, where
\[
    \mathcal{E} = \{ H_v:=\overline{K}_{\deg(v)-1} \mid v \in V(G)\}
\]
and $\overline{K}_k$ is the empty graph on $k$ vertices.
\end{definition}

Our first main result is the following.
Let $\calF_1(G)$ denote the flow polytope of unit flows on $G$.

\begin{theorem}\label{thm:volumeequalsmatching}
For $H$ a connected bipartite graph with no vertices of degree zero or one and $G=G(H)$ the extension of $H$, the normalized volume $\vol(\calF_1(G))$ is equal to the number of matchings in $W(H)$.
\end{theorem}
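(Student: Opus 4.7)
The plan is to compute $\vol(\calF_1(G(H)))$ as an explicit integral over a simplex, then identify the result with the number of matchings in $W(H)$. First, I would introduce convenient coordinates on $\calF_1(G(H))$: a flow assigns $a_{k,i}\geq 0$ to $\alpha_{k,i}$, $b_{i,j}\geq 0$ to $\beta_{i,j}$, and $c_{j,l}\geq 0$ to $\gamma_{j,l}$, subject to $a_{1,i}+a_{2,i}=\sum_{j\in N_H(i)} b_{i,j}=:x_i$ at each $i\in S(H)$, $c_{j,1}+c_{j,2}=\sum_{i\in N_H(j)} b_{i,j}=:y_j$ at each $j\in T(H)$, and $\sum_{(i,j)\in E(H)} b_{i,j}=1$. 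Projecting onto the $b$-coordinates realizes $\calF_1(G(H))$ as a lattice fiber bundle over $P(H):=\{B\in\bbR^{E(H)}_{\geq 0}: \sum_e b_e=1\}$, the standard $(|E(H)|-1)$-simplex, with fiber over $B$ the box $\prod_i [0, x_i(B)]\times \prod_j [0, y_j(B)]$ of Euclidean volume $\prod_i x_i \prod_j y_j$. Since the bundle structure is cut out by integer-coefficient equations, Fubini applies in the lattice normalization and
\[
\vol(\calF_1(G(H))) = d!\int_{P(H)} \prod_{i\in S(H)} x_i(B)\prod_{j\in T(H)} y_j(B)\, dB,
\]
where $d=|V(H)|+|E(H)|-1$ is the dimension of $\calF_1(G(H))$.

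Next I would expand the integrand as $\prod_i x_i \prod_j y_j = \sum_{F,G} \prod_{(i,j)\in E(H)} b_{i,j}^{m_{i,j}^{F,G}}$, where $F$ and $G$ range over choice functions with $F(i)\in N_H(i)$ for $i\in S(H)$ and $G(j)\in N_H(j)$ for $j\in T(H)$, and $m_{i,j}^{F,G}=[F(i)=j]+[G(j)=i]$. Since $\sum_e m_e^{F,G}=|V(H)|$, the Dirichlet formula $\int_{P(H)}\prod b_e^{m_e}\, dB = \prod_e m_e!/(|E(H)|-1+\sum_e m_e)!$ has denominator exactly $d!$, which cancels, giving
\[
\vol(\calF_1(G(H))) = \sum_{F,G}\prod_{e\in E(H)} m_e^{F,G}! = \sum_{F,G} 2^{|M(F,G)|},
\]
where $M(F,G):=\{(i,j)\in E(H) : F(i)=j \text{ and } G(j)=i\}$ is automatically a matching of $H$.

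I would then reorganize this double sum via $2^{|M|}=\#\{N : N\subseteq M\}$:
\[
\vol(\calF_1(G(H))) = \sum_{N\text{ matching in }H}\#\{(F,G) : N\subseteq M(F,G)\} = \sum_{N}\prod_{v\in V(H)\setminus V(N)}\deg_H(v),
\]
since $N\subseteq M(F,G)$ pins down $F$ and $G$ on $V(N)$ while leaving $\deg_H(v)$ independent choices at each remaining vertex. The same sum counts matchings of $W(H)$: every matching of $W(H)$ restricts to a matching $N$ of $H$, and at each $v\notin V(N)$ one either leaves $v$ unmatched or matches it to one of its $\deg(v)-1$ whiskers, for $\deg(v)$ total options per uncovered vertex. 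Equating the two expressions proves the theorem.

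The main obstacle is the bookkeeping in the opening step: verifying that the projection $\calF_1(G(H))\to P(H)$ really is a lattice fiber bundle, so that the Dirichlet integral on $P(H)$ reproduces the lattice-relative volume of $\calF_1(G(H))$ rather than a rescaled version. Once that normalization is pinned down, the remaining manipulations are elementary.
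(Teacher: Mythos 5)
Your proposal is correct and takes a genuinely different route from the paper. The paper's proof of this theorem is a two-line reduction: Corollary~3.10 gives a bijection between maximal cliques of the DKK triangulation of $\calF_1(G)$ (under the canonical bipartite framing) and matchings of $W(H)$, and Theorem~2.14 identifies the number of maximal simplices in a unimodular triangulation with the normalized volume. The real labor in the paper lies in constructing the intermediate combinatorial object (clique vectors) and proving Theorems~3.6 and~3.9. You instead compute the volume directly: project $\calF_1(G(H))$ onto the $\beta$-coordinates, observe that the image is the standard simplex on $E(H)$ and each fiber is a box of lattice volume $\prod_i x_i \prod_j y_j$, apply Fubini for the lattice fibration, expand the integrand into monomials indexed by pairs of choice functions $(F,G)$, and evaluate with the Dirichlet integral. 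The resulting quantity $\sum_{F,G} 2^{|M(F,G)|}$ rearranges, via $2^{|M|}=\sum_{N\subseteq M}1$, to $\sum_{N}\prod_{v\notin V(N)}\deg_H(v)$, which is the matching count of $W(H)$ classified by which $H$-edges appear. The lattice normalization concern you flag is real but benign: $(b_e)_{e\neq e_0}$ together with $(a_{1,i})_i$ and $(c_{j,1})_j$ give unimodular coordinates on the affine hull, so the projection respects the lattice measures on base and fiber and Fubini applies exactly as you want, and $|E(H)|-1+|V(H)|=\dim\calF_1(G)$ makes the Dirichlet denominator cancel the $d!$. The trade-off between the two arguments is clear: your computation is shorter and more self-contained for the volume statement alone, while the paper's construction produces a concrete triangulation indexed by clique vectors, which it then exploits (via a half-open decomposition ordered by the framing lattice) to obtain the finer $h^*$-polynomial identity in Theorem~1.11; your Dirichlet approach does not obviously yield that refinement.
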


\begin{proof}
Corollary~\ref{cor:cliquematchingbijection} establishes a bijection between the maximal simplices in a unimodular triangulation of $\calF_1(G)$ and the matchings in $W(H)$.
The result then follows by applying Theorem~\ref{thm:volumeunimodular}, which states that the number of maximal simplices in a unimodular triangulation equals the normalized volume of the polytope.
\end{proof}

The second main result of this work is the following refinement of Theorem~\ref{thm:volumeequalsmatching}.
The proof of Theorem~\ref{thm:matchingequalshstar} is given at the end of Section~\ref{sec:ehrhartbijection}.
We denote by $\mu(W(H);z)$ the unsigned matching polynomial of $W(H)$.

\begin{theorem}\label{thm:matchingequalshstar}
For $H$ a connected bipartite graph with no vertices of degree zero or one and $G=G(H)$ the extension of $H$, we have
\begin{align*}
    \mu(W(H);z) = h^*(\calF_1(G);z)\ .
\end{align*}
\end{theorem}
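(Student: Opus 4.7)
The plan is to reduce Theorem~\ref{thm:matchingequalshstar} to a combinatorial identity for the unimodular triangulation of $\calF_1(G)$ already obtained for Theorem~\ref{thm:volumeequalsmatching}, and then to refine the bijection of Corollary~\ref{cor:cliquematchingbijection} into a grading-preserving one.

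First I would invoke the standard fact that whenever $\Delta$ is a unimodular triangulation of a $d$-dimensional lattice polytope $P$, the Ehrhart $h^*$-polynomial of $P$ equals the simplicial $h$-polynomial of $\Delta$:
\[
h^*(P;z) \;=\; \sum_{i=0}^{d+1} f_{i-1}(\Delta)\,z^{i}(1-z)^{d+1-i}.
\]
This follows directly from the Ehrhart count $L_P(k)=\sum_{i} f_i(\Delta)\binom{k-1}{i}$, valid because every maximal simplex of $\Delta$ is unimodular, combined with the routine algebraic manipulation converting the $f$-vector of a simplicial complex into its $h$-vector. Applied to the unimodular triangulation $\Delta$ of $\calF_1(G)$ from the proof of Theorem~\ref{thm:volumeequalsmatching}, this reduces the problem to proving the combinatorial identity $h(\Delta;z)=\mu(W(H);z)$.

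To establish this equality I would pursue a shelling-based approach. Fix a linear order on the edges of $W(H)$ and extend it to a lexicographic order on matchings of $W(H)$; then use the induced order on the maximal simplices $\sigma_{M}$ of $\Delta$ (via Corollary~\ref{cor:cliquematchingbijection}) as a candidate shelling order. For each matching $M$ I would then identify the associated shelling restriction face $R(\sigma_{M})$ and prove the key identity
\[
|R(\sigma_{M})|\;=\;|M|.
\]
Applying Stanley's shelling formula $h(\Delta;z)=\sum_{M} z^{|R(\sigma_{M})|}$ and grouping matchings by size would then yield $h(\Delta;z)=\mu(W(H);z)$, completing the proof.

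The main obstacle is the verification $|R(\sigma_{M})|=|M|$, which demands a delicate analysis of how adjacent maximal cliques of mutually compatible routes in $G(H)$ overlap, and how this overlap pattern is governed by the matching structure on $W(H)$. Concretely, one must show that in the chosen shelling exactly the vertices of $\sigma_{M}$ indexed by the edges of $M$ are forced to be ``new'' (i.e.\ absent from every earlier maximal simplex), while vertices indexed by edges of $W(H)$ not in $M$ can always be exchanged for a vertex appearing in some earlier simplex. A natural alternative, if an explicit shelling proves hard to construct, is to build a dimension-preserving bijection between all faces of $\Delta$ and pairs $(M,T)$, where $M$ is a matching of $W(H)$ and $T$ is a subset of a distinguished ``exchange region'' of $W(H)$ attached to $M$; decomposing the $f$-vector of $\Delta$ by $M$ and applying the usual $f$-to-$h$ transformation would then recover the matching polynomial coefficient by coefficient.
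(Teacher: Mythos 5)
Your high-level reduction is sound: because the DKK triangulation of $\calF_1(G)$ is unimodular, one indeed has $h^*(\calF_1(G);z) = h(\Delta;z)$, and if $\Delta$ has a shelling whose restriction faces $R(\sigma_M)$ satisfy $|R(\sigma_M)| = |M|$, Stanley's formula immediately yields $h(\Delta;z) = \mu(W(H);z)$. So you have correctly located where the content of the theorem lives. However, as you yourself flag, the proposal leaves the two essential claims unproved: that the lexicographic order on matchings actually is a shelling order on the DKK triangulation, and that the restriction face of $\sigma_M$ has size exactly $|M|$. These are not routine verifications, and nothing in the proposal narrows the gap. A generic lex order on matchings carries no a priori compatibility with the adjacency structure of the DKK triangulation (two cliques are adjacent precisely when one is obtained from the other by a single "rotation" of a route), so the candidate order could easily fail the shelling condition; and even granting a shelling, computing restriction faces requires a precise local description of how adjacent cliques overlap, which in this setting is governed by the framing and is not accessible from the matching labels alone.

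The paper's proof fills exactly this gap, via a related but more structured device. Rather than hoping a lex order shells, it exploits the framing lattice $\scrL_{G,F}$ of von Bell and Ceballos, a poset on the maximal cliques whose cover relations are single ccw rotations. It then proves a general half-open decomposition lemma (Lemma~\ref{lem:disjointunion}): under a verifiable hypothesis on $\scrL_{G,F}$ — roughly, that every face $S$ of $\Delta$ has a unique "maximal" clique $C_{\max}(S)$ reachable from any clique containing $S$ by a saturated chain through cliques containing $S$ — one gets a disjoint union of half-open simplices, with $\Delta$ truncated along the facets corresponding to its covers. This hypothesis is checked in Corollary~\ref{cor:dualgraphsatisfieslemma} using the explicit clique-vector coordinates, and Lemma~\ref{lem:ehrharthalfopensimplex} then gives $h^*(\calF_1(G);z) = \sum_{C} z^{\cov(C)}$. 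The heart of the argument is Proposition~\ref{prop:cover}, an exact characterization of the cover relations in terms of clique vectors, from which Corollary~\ref{cor:coverings of a clique} deduces $\cov_{\scrL_{G,F}}(C) = |\psi\circ\phi^{-1}(C)|$, i.e., the number of covers of $C$ equals the number of edges in the associated matching. This is precisely the identity you called "the main obstacle," and it occupies most of Section~\ref{sec:ehrhartbijection}. In short: your plan is morally parallel to the paper's proof — both compute $h^*$ by assigning to each maximal simplex a "restriction size" and showing it equals the matching size — but the paper replaces your unverified lex-shelling claim with a poset-driven half-open decomposition and supplies the delicate local analysis (Proposition~\ref{prop:cover} and its two corollaries) that your proposal leaves entirely open.
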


We obtain from Theorem~\ref{thm:matchingequalshstar} the following corollary.

\begin{corollary}\label{cor:hstarrealrooted}
For $H$ a connected bipartite graph with no vertices of degree zero or one and $G=G(H)$ the extension of $H$, the Ehrhart $h^*$-polynomial $h^*(\calF_1(G);z)$ is real-rooted, hence log-concave and unimodal.
\end{corollary}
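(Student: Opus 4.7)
The plan is to deduce this corollary directly from Theorem~\ref{thm:matchingequalshstar} together with a classical result of Heilmann and Lieb on matching polynomials. By Theorem~\ref{thm:matchingequalshstar} we have
\[
    h^*(\calF_1(G); z) = \mu(W(H); z),
\]
so it suffices to show that the unsigned matching polynomial of the finite simple graph $W(H)$ has only real roots.

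To establish this, I would invoke the Heilmann--Lieb theorem, which asserts that the signed matching polynomial $\sum_{k}(-1)^k m_k(\Gamma)\,z^{n-2k}$ of any finite simple graph $\Gamma$ on $n$ vertices has only real roots. A standard substitution relating the signed and unsigned matching polynomials (writing the signed polynomial as $z^n\,\mu(\Gamma;-z^{-2})$ when $n$ is even, with an analogous adjustment by a factor of $z$ when $n$ is odd) then shows that $\mu(\Gamma;z)$ is itself real-rooted, with roots that are in fact nonpositive reals.

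Finally, a polynomial with nonnegative real coefficients and only real roots has a log-concave coefficient sequence by Newton's inequalities, and a log-concave sequence of nonnegative reals with no internal zeros is unimodal. Since $h^*$-polynomials of lattice polytopes always have nonnegative integer coefficients by Stanley's nonnegativity theorem, and since the coefficients $m_k(W(H))$ of $\mu(W(H);z)$ are strictly positive for $0\le k\le \nu(W(H))$ and zero thereafter, this yields both the log-concavity and the unimodality of $h^*(\calF_1(G);z)$.

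The only substantive input is the Heilmann--Lieb theorem itself; the main ``obstacle'' in writing up the proof is the routine bookkeeping needed to pass between the signed matching polynomial (to which Heilmann--Lieb applies in its classical form) and the unsigned matching polynomial $\mu(W(H);z)$ appearing in Theorem~\ref{thm:matchingequalshstar}, together with the standard implication chain from real-rootedness to log-concavity to unimodality.
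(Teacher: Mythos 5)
Your proposal is correct and follows essentially the same route as the paper: apply Theorem~\ref{thm:matchingequalshstar} to identify $h^*(\calF_1(G);z)$ with $\mu(W(H);z)$, then invoke the Heilmann--Lieb theorem (the paper's Theorem~\ref{thm:matchingpolyrealrooted}, which is already stated there in the unsigned form, so the signed-to-unsigned substitution you carry out is folded into the cited result). Your additional bookkeeping is sound, though note that the identity $M(\Gamma;z)=z^n\mu(\Gamma;-z^{-2})$ holds regardless of the parity of $n$, so the parity caveat is unnecessary.
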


\begin{proof}
    This follows directly from Theorem~\ref{thm:matchingpolyrealrooted} below.
\end{proof}

The remainder of the paper is structured as follows.
In Section~\ref{sec:background}, we review necessary background regarding flow polytopes, DKK triangulations, Ehrhart theory, and matching polynomials.
In Section~\ref{sec:volumebijection}, we establish the bijections needed to prove Theorem~\ref{thm:volumeequalsmatching}.
In Section~\ref{sec:ehrhartbijection}, we prove Theorem~\ref{thm:matchingequalshstar}.

\section{Background}\label{sec:background}

\subsection{Flow polytopes}
In this work, we are interested in the structure of the flow polytope associated with $G=G(H)$ where $H$ is a bipartite DAG.
We review here required background regarding flow polytopes found in Danilov, Karzanov and Koshevoy in~\cite{DKK12}, generally following their notation.

A \emph{route} in $G$ is a path in $G$ from $s$ to $t$, and the set of all routes is denoted $\calP=\calP(G)$. 
If $v$ is an inner vertex on a route $R$, then $R$ splits into two subpaths at $v$. 
Let $Rv$ denote the subpath of $R$ from $s$ to $v$, and let $vR$ denote the subpath from $v$ to $t$. 
Given a linear order $e_1$, $e_2$, $\ldots$, $e_{|E|}$ on the edges in $G$, for a route $R \in \calP(G)$ we define its characteristic vector to be 
\[
\bv_R :=\sum_{e_i\in R} \be_{i} \, .
\]
If the edges in $G$ are not assigned a linear order, we instead index a basis for the space $\R^{|E|}$ by the edges $e\in E(G)$.

\begin{example}
Consider the graph $G(K_{3,2})$ in Figure~\ref{fig:G(H)labels}.
The edges $\alpha_{2,2}\beta_{2,5}\gamma_{5,2}$ form a route in $G$, which can be split at vertex $2$ and vertex $5$.
The characteristic vector for this route is $\be_{\alpha_{2,2}}+\be_{\beta_{2,5}}+\be_{\gamma_{5,2}}\in \R^{16}$.
\end{example}

\begin{definition}\label{def:flow}
A \emph{flow} $\flow$ on a DAG $G$ is a function $\flow: E(G) \to \R$ which preserves flow at each inner vertex, i.e., for every inner vertex $v$ we have
\[
\sum_{e\,\in \, \mathrm{in}(v)} \flow(e) = \sum_{e\,\in \, \mathrm{out}(v)} \flow(e) \, .
\]
Let $\calF = \calF(G)$ denote the space of flows on $G$, and let $\calF_+ = \calF_+(G)$ denote the cone of flows satisfying $x_e\geq 0$ for all edges $e\in E(G)$.
The \emph{flow polytope} $\calF_1 = \calF_1(G)$ is the set of all nonnegative flows on $G$ of size one, i.e., flows satisfying
\[
\sum_{\substack{v \text{ is a source} \\ e \,\in \, \mathrm{out}(v)}} \flow(e) = 1 \, .
\]
\end{definition}

The flow polytope $\calF_1$ is defined as an intersection of halfspaces and a hyperplane; the total unimodularity of the constraint matrix implies the following well-known description of the vertices.

\begin{proposition}\label{prop:flowbasics}
The vertices of $\calF_1$ are the characteristic vectors $\{\bv_R: R\in \calP(G)\}$ and 
\[
\dim( \mathcal{F}_1) = |E(G)| - \#\{v \in V(G) : v \text{ is an inner vertex } \} -1 \, .
\]
\end{proposition}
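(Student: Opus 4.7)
The plan is to establish the vertex description and the dimension formula separately, both by exploiting standard linear-algebraic structure of the signed vertex-edge incidence matrix of $G$.

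For the vertex description, I appeal to the classical total unimodularity of the incidence matrix of a directed graph, in which every column has one $+1$ and one $-1$ with remaining entries zero. Writing $\calF_1 = \{x \geq 0 : Ax = b\}$, the coefficient matrix $A$ stacks the incidence rows at the inner vertices together with the size-one equation; the latter is (up to sign) the row indexed by the source $s$, so the augmented matrix remains totally unimodular and $b$ is integer. Hence every vertex of $\calF_1$ is a $0/1$ vector. A $0/1$ flow of size one is the indicator of an edge set on which conservation holds at each inner vertex and exactly one unit leaves $s$; tracing this support forward from $s$ in a topological order of $G$ shows it must coincide with the edge set of a single $s$-$t$ path, i.e., a route $R \in \calP(G)$. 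The reverse inclusion $\bv_R \in \calF_1$ is immediate from the definition.

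For the dimension formula, $\calF_1$ lies in the affine subspace of $\R^{|E(G)|}$ cut out by $\#\{v \in V(G) : v \text{ is an inner vertex}\} + 1$ equalities (flow conservation at each inner vertex plus the size-one normalization). I would verify these are linearly independent. The rows of the full incidence matrix of the connected DAG $G$ have rank $|V(G)|-1$, with unique dependence $\sum_{v \in V(G)} r_v = 0$. Hence the rows indexed by the inner vertices are independent: any dependence among them extends (by setting $c_s = c_t = 0$) to a linear combination of all incidence rows that equals zero, which must be a scalar multiple of the all-ones vector; since the scalar is zero, all $c_v$ vanish. Moreover the source row $r_s$ is not in the span of the inner rows (otherwise $r_t$ would be as well, contradicting the rank count), so the size-one equation is linearly independent of the conservation constraints. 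Therefore the affine hull of $\calF_1$ has the asserted dimension. To conclude full-dimensionality of $\calF_1$ in this affine hull, I would exhibit a relative interior point via the uniform average $\tfrac{1}{|\calP(G)|}\sum_{R \in \calP(G)} \bv_R$; under the hypothesis that $H$ has minimum degree at least two, every edge of $G = G(H)$ lies on at least one $s$-$t$ route, so this average has strictly positive coordinates.

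The only delicate step is the rank count for the equality constraints; the rest follows directly from total unimodularity plus the explicit construction of a strictly positive flow. No new ideas beyond these classical techniques are required, and the argument does not use any structure of $H$ beyond connectedness and the minimum degree assumption (which guarantees that every edge of $G(H)$ participates in some route).
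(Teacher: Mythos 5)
The paper states this proposition as well-known and gives no proof, attributing it only to total unimodularity of the constraint matrix; your proposal fills in the details the paper skips, using the same TU strategy. The argument is essentially sound, but there is a small gap worth flagging: total unimodularity together with an integer right-hand side gives you \emph{integer} vertices, not $0/1$ vertices directly. To finish you need the containment $\calF_1 \subseteq [0,1]^{E(G)}$. The standard fix is flow decomposition for DAGs: any nonnegative flow of size one on $G$ decomposes as a nonnegative combination of route indicators summing to one (no cycle terms since $G$ is acyclic), so each coordinate lies in $[0,1]$. With that in hand, integer vertices are $0/1$ and your topological sweep correctly identifies the support as a single $s$-$t$ path. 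In fact, flow decomposition alone already yields the vertex description without invoking TU at all, since an extreme point of $\calF_1$ written as a convex combination of the finitely many route indicators must equal one of them; TU is genuinely needed elsewhere in the theory (e.g., for the unimodularity of DKK triangulations) but is one of several viable routes for this particular statement. Your dimension argument — independence of the constraint rows via the incidence-matrix rank count, and full-dimensionality from the all-positive barycenter of the route indicators — is correct, and the appeal to the minimum-degree hypothesis on $H$ to guarantee that every edge of $G(H)$ lies on some route is exactly the right ingredient.
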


The facet-supporting hyperplanes of $\calF_1(G)$ are given by $x_e\geq 0$ for the edges $e\in E(G)$.
An edge is called \emph{idle} if it is the only incoming or outgoing edge from an inner vertex. 
Contracting an idle edge $e$ corresponds geometrically to a projection along the coordinate $x_e$, and we can find non-redundant facet descriptions of the flow polytope by contracting all idle edges.

\begin{definition}\label{def:reductiontofull}
Let $G = G^0$ be a DAG with $n$ idle edges. 
For $i=1,\dots,n$, construct a graph $G^i$ with $n-i$ idle edges by contracting an idle edge of $G^{i-1}$.
The unique resulting DAG $G^n$ is the \emph{complete contraction} of $G$.
\end{definition}

\begin{proposition}\label{prop:contractidle}
If $G$ is a DAG, then the set of facets of the flow polytope $\calF_1(G)$ can be identified with the set of edges in a complete contraction of $G$.
\end{proposition}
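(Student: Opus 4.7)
The plan is to reduce to the case where $G$ has no idle edges by showing that contracting an idle edge induces an affine isomorphism of flow polytopes, and then to verify that in a DAG without idle edges, every edge corresponds to a facet of the flow polytope.

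First I would handle the idle edges. Suppose $e = (u,v)$ is idle, say because $v$ is an inner vertex with $\mathrm{in}(v) = \{e\}$ (the case $\mathrm{out}(u) = \{e\}$ is symmetric). Flow conservation at $v$ forces
\[
f(e) = \sum_{e' \,\in\, \mathrm{out}(v)} f(e'),
\]
so the inequality $x_e \geq 0$ on $\calF_1(G)$ is implied by the inequalities $x_{e'} \geq 0$ for $e' \in \mathrm{out}(v)$ together with the flow conservation equation at $v$. In particular, $x_e \geq 0$ is redundant in the facet description of $\calF_1(G)$. Moreover, the coordinate projection $\pi_e : \R^{E(G)} \to \R^{E(G) \setminus \{e\}}$ restricts to an affine bijection $\calF_1(G) \to \calF_1(G/e)$ that carries the defining inequality $x_{e''} \geq 0$ of $\calF_1(G)$ to the corresponding inequality of $\calF_1(G/e)$ for every $e'' \neq e$. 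Iterating this reduction via induction on the number of idle edges yields an affine isomorphism $\calF_1(G) \cong \calF_1(G^n)$, so facets of $\calF_1(G)$ biject with facets of $\calF_1(G^n)$ defined by edges $x_{e} \geq 0$ in the complete contraction $G^n$.

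It then remains to show that each edge $e$ of $G^n$ defines a genuine facet of $\calF_1(G^n)$. For this, let $F_e := \{x \in \calF_1(G^n) : x_e = 0\}$, which coincides with the set of unit flows on $G^n$ supported on $E(G^n) \setminus \{e\}$, i.e.\ with $\calF_1(G^n \setminus e)$ viewed inside $\R^{E(G^n)}$. Because $G^n$ has no idle edges, every inner vertex of $G^n$ has both in-degree and out-degree at least two, so $G^n \setminus e$ still has the property that every inner vertex lies on some route from a source to a sink, and Proposition~\ref{prop:flowbasics} applies to give
\[
\dim F_e = (|E(G^n)| - 1) - \#\{\text{inner vertices of } G^n\} - 1 = \dim \calF_1(G^n) - 1.
\]
Nonemptiness of $F_e$ follows by producing a single $s$-$t$ route of $G^n$ avoiding $e$: starting from an arbitrary route through $e$, at the endpoint of $e$ (an inner vertex) we may exit along a different edge, and iterating such local rerouting at each inner vertex produces the required route. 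Hence $F_e$ is a codimension-one face of $\calF_1(G^n)$, i.e.\ a facet.

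The main obstacle I anticipate is the final dimension computation: one must carefully check that $G^n \setminus e$ really satisfies the mild combinatorial hypotheses needed for Proposition~\ref{prop:flowbasics} to yield the predicted dimension, and that no flow conservation equation becomes dependent on the others after removing $e$. The absence of idle edges in $G^n$ is exactly what ensures that this subtraction of one edge shifts the edge count by one without collapsing any inner vertex into a source, sink, or isolated vertex, so the dimension drops by precisely one, as required.
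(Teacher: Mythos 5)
The paper states Proposition~\ref{prop:contractidle} as standard background with no proof of its own, only the informal remark preceding it that contracting an idle edge corresponds geometrically to projecting away the redundant coordinate $x_e$. Your reconstruction follows that sketch faithfully, and the first half of your argument --- that the idle inequality $x_e \geq 0$ is implied by flow conservation together with the other nonnegativity constraints, and that $\pi_e$ restricts to an affine isomorphism $\calF_1(G) \cong \calF_1(G/e)$ --- is correct.

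The second half has two gaps. First, you establish that each $F_e$ is a nonempty face of codimension at most one, but you never argue that distinct edges of $G^n$ yield \emph{distinct} facets, which the claimed identification requires. This is fixable: for $e \neq e'$, take any route through $e$; if it also uses $e'$, reroute at the tail of $e'$ (if $e'$ appears later, using out-degree at least two) or at the head of $e'$ (if $e'$ appears earlier, using in-degree at least two) to obtain a route through $e$ avoiding $e'$, whose characteristic vector then lies in $F_{e'}$ but not in $F_e$. Second, you apply the dimension formula of Proposition~\ref{prop:flowbasics} to $G^n \setminus e$, and you correctly flag this as the delicate step, but the risk is not that an inner vertex degenerates into a source or sink (you rightly rule that out); it is that $e$ could be a bridge, leaving $G^n \setminus e$ disconnected, and idle-free DAGs can indeed have bridges. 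The formula does survive disconnection --- each component of a DAG still has a source and a sink, so the conservation and normalization equations remain linearly independent, and every edge of $G^n \setminus e$ still lies on some route, giving a relative interior point --- but your write-up asserts this applicability rather than verifying it.
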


Note that in this work we consider $G(H)$ for $H$ bipartite without any vertices of degree zero or one; hence, the $G(H)$ we consider will never have idle edges and will automatically be a complete contraction.

\subsection{DKK Triangulations}
Recall that a \emph{triangulation} of a lattice polytope $P$ is a set of lattice simplices that forms a simplicial complex and whose union is $P$.
Danilov, Karzanov and Koshevoy~\cite{DKK12} identified a family of regular unimodular triangulations of a flow polytope based on the combinatorial method of framing.

\begin{definition}
A \emph{framing} $F$ at an inner vertex $v$ of $G$ is a specification of linear orders $\preceq_{F,\inedge(v)}$ and $\preceq_{F,\outedge(v)}$ on the set of edges incoming to $v$ and on the set of edges outgoing from $v$.  
A \emph{framing} of a DAG $G$ is defined by choosing a framing at every inner vertex of $G$.
A \emph{framed graph} $(G,F)$ is a pair consisting of a DAG $G$ with a specific framing $F$ of $G$.
\end{definition}

Figure~\ref{fig:framedcompletebipartiteextension} provides an example of a framing on $G(K_{3,2})$.
In the case of an extension of a bipartite graph, we will rely on the following canonical framing.

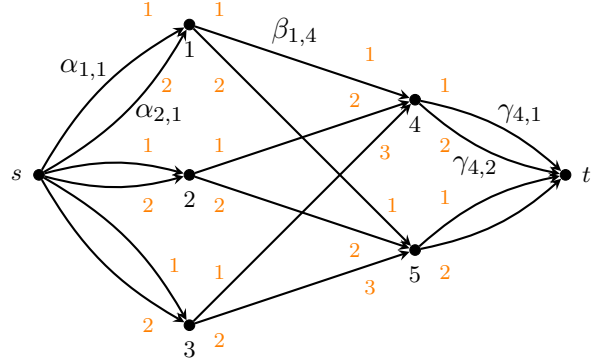
\begin{figure}
\begin{center}
\begin{tikzpicture}[scale=2]
\vertex[fill,label=below:\scriptsize{$1$}](a1) at (0,2) {};\vertex[fill,label=below:\scriptsize{$2$}](a2) at (0,1) {};
\vertex[fill,label=below:\scriptsize{$3$}](a3) at (0,0) {};
\vertex[fill,label=below:\scriptsize{$4$}](a4) at (1.5,1.5) {};
\vertex[fill,label=below:\scriptsize{$5$}](a5) at (1.5,0.5) {};
\vertex[fill,label=left:\scriptsize{$s$}](s) at (-1,1) {};
\vertex[fill,label=right:\scriptsize{$t$}](t) at (2.5,1) {};

\draw[-stealth, thick] (a1)--(a4);
\draw[-stealth, thick] (a1)--(a5);
\draw[-stealth, thick] (a2)--(a4);
\draw[-stealth, thick] (a2)--(a5);
\draw[-stealth, thick] (a3)--(a4);
\draw[-stealth, thick] (a3)--(a5);

\draw[-stealth, thick] (s) to[bend left=15] (a1);
\draw[-stealth, thick] (s) to[bend right=15] (a1);
\draw[-stealth, thick] (s) to[bend left=15] (a2);
\draw[-stealth, thick] (s) to[bend right=15] (a2);
\draw[-stealth, thick] (s) to[bend left=15] (a3);
\draw[-stealth, thick] (s) to[bend right=15] (a3);
\draw[-stealth, thick] (a4) to[bend left=15] (t);
\draw[-stealth, thick] (a4) to[bend right=15] (t);
\draw[-stealth, thick] (a5) to[bend left=15] (t);
\draw[-stealth, thick] (a5) to[bend right=15] (t);
\draw[-stealth, thick] (t);
\draw[-stealth, thick] (t);

\node[orange] at (-0.275,2.1) {\tiny $1$};
\node[orange] at (-0.15,1.6) {\tiny $2$};
\node[orange] at (-0.275,1.2) {\tiny $1$};
\node[orange] at (-0.275,.8) {\tiny $2$};
\node[orange] at (-.1,.4) {\tiny $1$};
\node[orange] at (-0.275,0) {\tiny $2$};
\node[orange] at (1.7,1.6) {\tiny $1$};
\node[orange] at (1.7,1.2) {\tiny $2$};
\node[orange] at (1.7,.85) {\tiny $1$};
\node[orange] at (1.7,.35) {\tiny $2$};

\node[orange] at (0.2,2.1) {\tiny $1$};
\node[orange] at (0.2,1.6) {\tiny $2$};
\node[orange] at (0.2,1.2) {\tiny $1$};
\node[orange] at (0.2,.8) {\tiny $2$};
\node[orange] at (0.2,.35) {\tiny $1$};
\node[orange] at (0.2,-.1) {\tiny $2$};
\node[orange] at (1.2,1.8) {\tiny $1$};
\node[orange] at (1.1,1.5) {\tiny $2$};
\node[orange] at (1.3,1.15) {\tiny $3$};
\node[orange] at (1.35,.8) {\tiny $1$};
\node[orange] at (1.1,.5) {\tiny $2$};
\node[orange] at (1.2,.25) {\tiny $3$};

\node at (-0.7,1.7) {\small $\alpha_{1,1}$};
\node at (-0.2,1.4) {\small $\alpha_{2,1}$};
\node at (0.7,1.95) {\small $\beta_{1,4}$};
\node at (2.2,1.4) {\small $\gamma_{4,1}$};
\node at (1.9,1.05) {\small $\gamma_{4,2}$};
\end{tikzpicture}
\end{center}
    \caption{An example of a framed $G(K_{3,2})$. Note that five of the edges have been labeled by their $\alpha$, $\beta$, $\gamma$ notation. Note also that the framing labels agree with the orderings induced by the canonical bipartite framing.}
\label{fig:framedcompletebipartiteextension}
\end{figure}

\begin{definition}
    Let $H$ be a bipartite graph with no vertices of degree zero or one, and let $G(H)$ be its extension graph.
    We define the \emph{canonical bipartite framing} to be the framing of $G(H)$ where at each vertex $i$:
    \begin{itemize}
        \item 
    $\alpha_{1,i}<\alpha_{2,i}$,
    \item $\gamma_{i,1}<\gamma_{i,2}$,
    \item $\beta_{i,j}<\beta_{i,k}$ if $j<k$, and
    \item $\beta_{j,i}<\beta_{k,i}$ if $j<k$.
    \end{itemize}
Further, when drawing $G(H)$, we order the vertices of each shore of $G(H)$ in increasing order from top to bottom, and we list both the $\alpha$ and $\gamma$ edges in increasing order from top to bottom.
\end{definition}

\begin{example}
    Note that the framing of $G(K_{3,2})$ given in Figure~\ref{fig:framedcompletebipartiteextension} agrees with the canonical bipartite framing.
    For example, $\beta_{1,4}<\beta_{1,5}$ is denoted in the figure by labeling at vertex $1$ the edge $\beta_{1,4}$ with $1$ and the edge $\beta_{1,5}$ with $2$ to indicate their relative order, since $4<5$.
    Similarly, $\gamma_{4,1}<\gamma_{4,2}$ is indicated by the labeling of those edges by $1$ and $2$, respectively.
\end{example}

For $v$ an inner vertex of $G$, let $\In(v)$ denote the set of partial routes in $G$ from a source vertex to $v$.
Let $\Out(v)$ denote the set of partial routes in $G$ from $v$ to a sink vertex.
A framing $F$ of $G$ induces a total order on these sets of partial routes as follows.

\begin{definition}
For $v$ an inner vertex of $G$, let $R, S \in \In(v)$ be partial routes which agree on the subroutes $R'\subseteq R$ and $S'\subseteq S$ from vertex $u$ to $v$ such that the edges immediately preceding $u$ on $R$ and on $S$ are distinct.
We denote these preceding edges by $e_R$ and $e_S$, respectively.
We define $R \preceq_{F,\In(v)} S$ if and only if $e_R \preceq_{F, \inedge(u)} e_S$, and we similarly define $\preceq_{F,\Out(v)}$.
\end{definition}

The key ingredient for framing triangulations is the notion of coherence among routes.

\begin{definition}
Let $R$ and $S$ be routes of $G$ containing a common subroute from $u$ to $v$, where $u$ and $v$ might be the same vertex, which we denote by $[u,v]$.
The routes $R$ and $S$ are said to be \emph{in conflict}, or \emph{incoherent, at $[u,v]$} if $Ru$ and $Su$ are ordered differently from $vR$ and $vS$.  
Otherwise, we say they are \emph{coherent at $[u,v]$}.
The routes $R$ and $S$ are \emph{coherent} if they are coherent at every shared subroute.
A route is \emph{exceptional} if it is coherent with every route of $G$.
A \emph{clique} of $G$ is a collection of pairwise coherent routes with respect to the framing $F$.    
\end{definition}

\begin{example}
    In Figure~\ref{fig:framedcompletebipartiteextension}, the routes $\alpha_{2,1}\beta_{1,5}\gamma_{5,2}$ and $\alpha_{1,1}\beta_{1,5}\gamma_{5,1}$ are coherent while the routes $\alpha_{2,1}\beta_{1,5}\gamma_{5,1}$ and $\alpha_{1,1}\beta_{1,5}\gamma_{5,2}$ are in conflict.
\end{example}

This definition of coherence leads to the following theorem that establishes the \emph{DKK triangulation of $\calF_1(G)$ corresponding to the framing $F$}, denoted by $\DKK(G,F)$.
For each clique $C$ of $(G,F)$, we defined the \emph{clique simplex} to be the convex hull of the indicator vectors of the routes in $C$.
We will frequently refer to the simplex for a clique as the clique itself.

\begin{theorem}[{Danilov, Karzanov, Koshevoy~\cite{DKK12}}]\label{thm:dkk-triangulation}
Let $(G,F)$ be a framed DAG.
The set of cliques of $G$ with respect to the framing $F$ forms a regular unimodular triangulation of $\calF_1(G)$.
\end{theorem}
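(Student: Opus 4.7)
The plan is to establish the four defining properties of a regular unimodular triangulation of $\calF_1(G)$: (i) every point of $\calF_1(G)$ lies in some clique simplex, (ii) two clique simplices intersect in a common face, (iii) each maximal clique simplex is unimodular, and (iv) the whole subdivision arises as the lower envelope of some lifting.

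The combinatorial heart of (i) and (ii) is a canonical greedy decomposition of flows into coherent routes. Given a nonnegative unit flow $\flow$, I would construct a route $R$ by starting at $s$ and, at each inner vertex $v$ reached, following the $\preceq_{F,\outedge(v)}$-minimal outgoing edge $e$ with $\flow(e) > 0$; this yields a route $R \in \calP(G)$. Set $\lambda_R := \min_{e \in R} \flow(e) > 0$, replace $\flow$ by $\flow - \lambda_R \bv_R$, and iterate until $\flow \equiv 0$. The output is a convex combination $\flow = \sum_R \lambda_R \bv_R$, and the main combinatorial claim is that the supporting routes are pairwise coherent: on any shared subroute $[u,v]$, the greedy rule at $v$ selects edges based only on $\preceq_{F,\outedge(v)}$ among positive-flow edges, which forces the outgoing orderings at $v$ to be compatible with the relative extraction order at $u$, ruling out conflict. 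This simultaneously gives covering and uniqueness of the decomposition supported on a clique, so distinct maximal cliques meet only in a common sub-clique.

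For (iii), the constraint matrix defining $\calF_1(G)$ is (a truncation of) the vertex-edge incidence matrix of $G$ and hence totally unimodular, so every lattice simplex whose vertices form an affine basis of $\calF_1(G)$ has normalized volume one. It therefore suffices to show a maximal clique has exactly $\dim \calF_1(G) + 1$ routes, which follows from Proposition~\ref{prop:flowbasics} together with a dimension count on the greedy decomposition of a generic interior flow.

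For (iv), I would assign each outgoing edge $e$ at an inner vertex $v$ a weight $w(e) := M^{r(e)}$, where $r(e)$ is the rank of $e$ in $\preceq_{F,\outedge(v)}$ and $M$ is a sufficiently large constant, and similarly weight the incoming side. Lifting each vertex $\bv_R$ to height $h(R) := \sum_{e \in R} w(e)$, I take the lower envelope of the resulting lifted polytope as the regular subdivision. The main obstacle, and the technical crux of the argument, is to verify that for any two incoherent routes $R, S$ conflicting at $[u,v]$, the uncrossing that exchanges their pre-$u$ and post-$v$ pieces produces routes $R', S'$ with strictly smaller total lifted height $h(R') + h(S') < h(R) + h(S)$; this is where the exponential weighting and the choice of $M$ are used to dominate all other contributions. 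Once this inequality is in hand, no incoherent pair can lie on a lower face, so the lower envelope coincides with $\DKK(G,F)$, completing the proof.
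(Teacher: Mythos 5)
The paper cites this result from Danilov, Karzanov, and Koshevoy~\cite{DKK12} and does not reprove it, so there is no in-paper proof to compare against; I will assess the proposal on its own terms.

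The greedy decomposition underlying your steps~(i) and~(ii) does not in general produce a pairwise coherent set of routes. At each inner vertex $v$ you choose the $\preceq_{F,\outedge(v)}$-minimal outgoing edge with positive residual flow, but coherence at a downstream vertex $w$ is governed by $\preceq_{F,\inedge(w)}$, and a framing is allowed to order the very same bundle of $v$-to-$w$ edges completely differently in $\outedge(v)$ than in $\inedge(w)$. Concretely, take $V=\{s,v,w,t\}$ with parallel edges $e_1,e_2$ from $s$ to $v$, parallel edges $m_1,m_2$ from $v$ to $w$, and parallel edges $f_1,f_2$ from $w$ to $t$. Frame $\preceq_{F,\inedge(v)}$ by $e_1<e_2$, $\preceq_{F,\outedge(v)}$ by $m_2<m_1$, $\preceq_{F,\inedge(w)}$ by $m_1<m_2$, and $\preceq_{F,\outedge(w)}$ by $f_1<f_2$. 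On the uniform flow $\flow\equiv 1/2$, your greedy extracts $e_1 m_2 f_1$ first (since $m_2$ is $\outedge(v)$-minimal and $f_1$ is $\outedge(w)$-minimal), then $e_2 m_1 f_2$; this pair conflicts at $w$, since $m_1<m_2$ orders them one way while $f_1<f_2$ orders them the other. Picking $e_2$ first at $s$ instead yields $e_2 m_2 f_1$ and $e_1 m_1 f_2$, which conflict at $v$. The actual cell of the DKK triangulation containing this flow is spanned by $e_1 m_2 f_2$ and $e_2 m_1 f_1$, which the greedy cannot reach. The root cause is that the greedy only controls outgoing choices, while coherence is a symmetric condition on incoming and outgoing orders; a further symptom is that the rule is undefined at the source $s$, which has no framing.

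Part~(iv) is the right idea and is essentially how the theorem is actually established: one exhibits a height function for which every incoherent pair admits an uncrossing that strictly lowers total lifted height, so the lower faces are exactly the coherent cliques. But once you have a regular subdivision whose cells are simplices, the covering and proper-intersection properties~(i)--(ii) follow immediately from the definition of a regular subdivision; they should not be attempted separately via the (broken) greedy. The logical weight should sit entirely on the lifting argument together with total unimodularity for step~(iii), and your step~(iii) also leans on the greedy for the cardinality count, which needs to be replaced by the fact that the cells of the lower envelope have affinely independent vertex sets.
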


\begin{remark}
    The \emph{coherence graph} for $(G,F)$ is the graph where vertices are the routes of $G$ and where two routes form an edge if and only if the routes are coherent. 
    As a simplicial complex, the DKK triangulation is isomorphic to the clique complex of the coherence graph, hence the use of ``clique'' to describe a pairwise coherent set of routes. 
\end{remark}

\subsection{Ehrhart Theory}\label{sec:ehrhart}

A $d$-dimensional convex \emph{lattice polytope} $P\subseteq\R^n$ is the convex hull of finitely many points in $\Z^n$ that span a $d$-dimensional affine subspace of $\R^n$.
We say $P$ is a \emph{$d$-simplex} if $P$ is $d$-dimensional and has $d+1$ vertices.
The \emph{Ehrhart polynomial} of $P$ is the function $i(P;t):=|tP\cap\Z^n|$, where $tP:=\{t\bp:\bp\in P\}$ denotes the $t^{th}$ dilate of the polytope $P$.
The \emph{Ehrhart series} of $P$ is the generating function
\[
  E(P;z) := \sum_{t\geq0}i(P;t)z^t = \frac{h_0^*+h_1^*z+\cdots+h_d^*z^d}{(1-z)^{\dim(P)+1}} \, .
\]
The fact that this is a rational function is due to Ehrhart~\cite{ehrhart}.
The coefficients $h^\ast_0,h^\ast_1,\ldots,h^\ast_d$ are known to be nonnegative integers~\cite{StanleyDecompositions}.
The polynomial $h^\ast(P;z):=h_0^*+h_1^*z+\cdots+h_d^*z^d$ is called the \emph{(Ehrhart) $h^\ast$-polynomial} of $P$.

An important property of a lattice polytope $P$ is its volume; the \emph{normalized volume} of $P$ is defined to be $d!$ times the Euclidean volume of $P$ with respect to the integer lattice induced by the affine span of $P$.
It is also known that
\[
\vol(P):=\sum_jh^*_j \, ;
\]
for a textbook treatment, see Beck and Robins~\cite{BeckRobinsCCD}.
Further, the following theorem connects the normalized volume of $P$ with unimodular triangulations.

\begin{theorem}\label{thm:volumeunimodular}
    If a lattice polytope $P$ admits a unimodular triangulation $\calT$, then $\vol(P)$ is equal to the number of maximal simplices in $\calT$.
\end{theorem}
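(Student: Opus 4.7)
The plan is to combine two standard facts: Euclidean volume is additive under any triangulation, and every unimodular $d$-simplex has normalized volume equal to one. First, I would recall the definition: a lattice $d$-simplex $\sigma\subseteq\R^n$ with vertices $w_0,w_1,\ldots,w_d$ is \emph{unimodular} precisely when the edge vectors $w_1-w_0,\ldots,w_d-w_0$ form a $\Z$-basis of the lattice $L$ induced on the affine span of $P$ by $\Z^n$. The standard determinant formula for the Euclidean volume of a simplex (relative to the lattice $L$) then gives $\vol_{\mathrm{Euc}}(\sigma)=1/d!$, so that the normalized volume $d!\cdot\vol_{\mathrm{Euc}}(\sigma)$ equals $1$.

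Next, I would invoke additivity of Euclidean volume across the triangulation $\calT$. Since any two distinct maximal simplices of $\calT$ meet in a face of dimension strictly less than $d=\dim(P)$, their intersection has Euclidean measure zero in the affine span of $P$. Writing $\calT_{\max}$ for the set of maximal (i.e., $d$-dimensional) simplices of $\calT$, this yields
\[
    \vol_{\mathrm{Euc}}(P)\;=\;\sum_{\sigma\in\calT_{\max}}\vol_{\mathrm{Euc}}(\sigma).
\]
Multiplying both sides by $d!$ converts Euclidean volumes into normalized volumes; applying unimodularity of $\calT$ to each summand gives $\vol(\sigma)=1$ for every $\sigma\in\calT_{\max}$, so
\[
    \vol(P)\;=\;\sum_{\sigma\in\calT_{\max}}\vol(\sigma)\;=\;|\calT_{\max}|,
\]
which is the asserted identity.

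This argument presents no substantive obstacle; it is textbook Ehrhart theory, and I expect the authors either to cite it as a known fact (for instance from Beck--Robins~\cite{BeckRobinsCCD}) or to dispatch it in one paragraph along the lines above. The only spot requiring a small amount of care is the first step, where unimodularity of $\sigma$ is translated into the value $1/d!$ for its Euclidean volume; this is just the formula $\vol_{\mathrm{Euc}}(\sigma)=|\det M|/d!$ for $M$ the edge matrix of $\sigma$, combined with the fact that a basis change matrix of the lattice $L$ has determinant $\pm1$. As an alternative route one could instead argue via $\vol(P)=\sum_j h^*_j$ and the half-open-decomposition description of $h^*$ for a unimodular triangulation, but this is heavier machinery than the direct additivity proof warrants.
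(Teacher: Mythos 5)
Your argument is correct and is the standard one. The paper does not actually prove Theorem~\ref{thm:volumeunimodular}: it is stated without proof in the background section (Section~\ref{sec:ehrhart}) as a known fact from Ehrhart theory, with the nearby textbook reference to Beck--Robins~\cite{BeckRobinsCCD} — exactly as you anticipated. Your one-paragraph proof via additivity of Euclidean volume over the maximal cells and the fact that each unimodular $d$-simplex has Euclidean volume $1/d!$ is precisely the argument one would supply if asked to fill in the citation, and it contains no gaps.
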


\subsection{Matching Polynomials}

Let $G$ be a graph with $n$ vertices and let $m_k$ be the number of $k$-edge matchings of $G$. 
The \emph{(unsigned) matching polynomial} of $G$, denoted $\mu(G;z)$, is defined as
\begin{align*}
    \mu(G;z) := \sum_{k=0}^{\floor{n/2}} m_k z^k\ .
\end{align*}

A foundational result regarding matching polynomials is the following, due to Heilmann and Lieb~\cite{heilmannlieb,stanleylogconcave}.

\begin{theorem}[Heilmann and Lieb~\cite{heilmannlieb}]
\label{thm:matchingpolyrealrooted}
    All the roots of the matching polynomial $\mu(G;z)$ are  real.
    As a corollary, the coefficient sequence of $\mu(G;z)$ is both log-concave and unimodal.
\end{theorem}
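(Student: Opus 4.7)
The plan is to prove the Heilmann--Lieb theorem via Godsil's path tree argument, which reduces the general case to trees, where matching polynomials coincide with characteristic polynomials of real symmetric matrices. First, I would translate to the signed matching polynomial $M(G;x) := \sum_{k} (-1)^k m_k x^{n-2k}$, which satisfies $M(G;x) = x^n \mu(G;-x^{-2})$. Since $M$ has parity $(-1)^n$, its nonzero roots come in pairs $\pm x_i$, each of which corresponds to a single root $-x_i^{-2}$ of $\mu$. Hence real-rootedness of $M$ is equivalent to real-rootedness of $\mu$ (with all roots necessarily non-positive, since $\mu$ has nonnegative coefficients and $\mu(G;0)=1$), and it suffices to prove that $M(G;x)$ has only real roots.

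The key combinatorial tool is the recursion
\[
M(G;x) = x \cdot M(G - v; x) - \sum_{u \sim v} M(G - u - v; x)
\]
for any $v \in V(G)$, proved by partitioning matchings of $G$ according to the status of $v$ (either unmatched, or matched to some neighbor $u$). With this in hand, I would apply Godsil's path tree construction: define $T(G,v)$ as the tree whose vertices are the vertex-simple paths in $G$ starting at $v$, with edges connecting paths that differ by a single extension at the far end. By simultaneous induction on $|V(G)|$, using the recursion both at $v$ in $G$ and at the root in $T(G,v)$, one proves that $M(G;x)$ divides $M(T(G,v);x)$ as polynomials in $x$. Since $T(G,v)$ is a tree, the standard identity $M(T;x) = \det(xI - A(T))$ (a permanent/determinant expansion using the absence of cycles in $T$) exhibits $M(T(G,v);x)$ as the characteristic polynomial of a real symmetric matrix, which has only real roots. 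Real-rootedness then passes to the divisor $M(G;x)$.

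The main obstacle is carrying out the inductive proof of the divisibility $M(G;x) \mid M(T(G,v);x)$. This requires carefully matching the two recursions and showing in particular that when the root is removed from $T(G,v)$, the remaining forest is a disjoint union of path trees $T(G-v,u)$ for $u$ ranging over neighbors of $v$ in $G$, so that
\[
M(T(G,v) - \mathrm{root};x) = \prod_{u \sim v} M(T(G-v,u);x).
\]
Aligning this product factorization with the sum on the right-hand side of the recursion is the delicate step. Once real-rootedness of $\mu(G;z)$ is established, the corollary on log-concavity and unimodality follows immediately from Newton's inequalities, since a polynomial with nonnegative coefficients and only real roots has a log-concave coefficient sequence, and any log-concave sequence of positive numbers is unimodal.
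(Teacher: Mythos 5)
The paper cites Heilmann--Lieb (and Stanley for the corollary) and does not reprove the theorem, so there is no internal argument to compare against; your proposal supplies an actual proof. Your sketch is a correct outline of Godsil's path-tree approach: the translation $M(G;x)=x^{n}\mu(G;-x^{-2})$ to the signed matching polynomial, the deletion recursion at a vertex, the divisibility $M(G;x)\mid M(T(G,v);x)$ established by simultaneous induction using the decomposition of $T(G,v)$ minus its root into the disjoint union of the path trees $T(G-v,u)$ over $u\sim v$, the identification of the matching polynomial of a forest with the characteristic polynomial of its real symmetric adjacency matrix (via Sachs' coefficient theorem, using the absence of cycles), and the descent of real-rootedness to the divisor $M(G;x)$, with the corollary following from Newton's inequalities. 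Two minor points worth spelling out in a full write-up: (i) writing $M(G;x)=x^{\,n-2\lfloor n/2\rfloor}P(x^{2})$, so that $\mu(G;z)=(-z)^{\lfloor n/2\rfloor}P(-1/z)$, real-rootedness of $M$ forces $P$ to have only nonnegative real roots, and it is from this --- not merely the $\pm x_{i}$ pairing you mention --- that one reads off that every root of $\mu$ is real and negative; and (ii) to pass from log-concavity to unimodality you should note that the sequence $(m_{k})$ has no internal zeros, which holds because any $(k+1)$-matching contains a $k$-matching. Both points are routine, and you have correctly identified Godsil's divisibility lemma as the crux of the argument.
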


\section{Volumes via Matchings}\label{sec:volumebijection}

In this section, we establish a bijection between cliques of routes in $G(H)$ for the canonical bipartite framing, matchings in $W(H)$, and a set of vectors that we refer to as clique vectors.
This bijection is the key to proving Theorem~\ref{thm:volumeequalsmatching}, and is the basis for the proof of Theorem~\ref{thm:matchingequalshstar} given in Section~\ref{sec:ehrhartbijection}.

We begin by setting notation for our bipartite graphs.
Assume that $H=S\uplus T$ is a bipartite graph with no vertex of degree zero or one, where $S=\{1,2,\ldots,n\}$ and $T=\{n+1,n+2,\ldots,n+m\}$.
Let $N_H(v)$ denote the set of neighbors of $v$ in $H$.

\begin{definition}
    Given $H$ as above, let $\cliques(H)$ denote the set of maximal cliques of $G(H)$ with respect to the canonical bipartite framing and let $\calM(H)$ denote the set of matchings in $W(H)$.
\end{definition}

\begin{definition}\label{def:cliquevector}
Given $H$ as above, a \emph{clique vector} for $H$ is a vector 
\[
\ba\in \{n+1,n+2,\ldots,n+m\}^{n}\times \{1,2,\ldots,n\}^{m}\times \{+,-\}^{E(H)}
\]
satisfying the following properties:
\begin{itemize}
    \item $a_i\in N_H(i)$ for each $i\in \{1,2,\ldots,n+m\}$, and
    \item $a_{i,j} \in \{+,-\}$ when both $a_i=j$ and $a_j=i$, and otherwise $a_{i,j} = -$.
\end{itemize}
When writing a clique vector, we list the entries $a_{i,j}$ lexicographically.
We denote by $\cliquevs(H)$ the set of all clique vectors of $H$.
\end{definition}

\begin{example}\label{ex:k32cliquevectors}
Consider $K_{3,2}$ as shown in Figure~\ref{fig:G(H)labels}.
Observe that
\begin{align*}
\ba&=(a_1,a_2,a_3,a_4,a_5,a_{1,4},a_{1,5},a_{2,4},a_{2,5},a_{3,4},a_{3,5}) \\
&= (4,5,4,1,3,-,-,-,-,-,-)
\end{align*}
is a valid clique vector, since each $a_i\in N_H(i)$ and ``$-$'' is always allowed as an edge-indexed entry. 
Further, we have that
\begin{align*}
\ba&= (a_1,a_2,a_3,a_4,a_5,a_{1,4},a_{1,5},a_{2,4},a_{2,5},a_{3,4},a_{3,5}) \\
&= (4,5,4,1,3,+,-,-,-,-,-)
\end{align*}
is a valid clique vector, since $a_1=4$ and $a_4=1$, thus $a_{1,4}=+$ is allowed.
However,
\[
\ba=(4,5,4,1,3,-,-,-,-,+,-)
\]
is not a valid clique vector, since $a_3=4$ while $a_4=1$.
\end{example}

We next define a map $\phi$ that sends each clique vector of $H$ to a maximal clique of $G(H)$ with respect to the canonical bipartite framing of $G(H)$.
For each clique vector $\ba$, the clique $\phi(\ba)$ will include for each edge $ij\in E(H)$ some number of routes passing through $\beta_{i,j}\in E(G(H))$, where the number of routes will fall into one of three cases.
The first case occurs for an edge $\beta_{i,j}$ where $a_i\neq j$ and $a_j\neq i$, in which case one route through $\beta_{i,j}$ is included in $\phi(\ba)$. 
The second case is when either $a_i=j$ or $a_j=i$, but not both, in which case two routes are included.
The third case is when both $a_i=j$ and $a_j=i$; in this case, there will be three routes contributed, with two subcases depending on the value of $a_{i,j}\in \{+,-\}$.

\begin{definition}\label{def:cliquemap}
    We define the \emph{clique map} $\phi:\cliquevs(H)\to \cliques(H)$ as follows.
    For each edge $ij\in E(H)$, we include in $\phi(\ba)$ the following routes:
    \begin{enumerate}
    \item[(i)] $a_i\neq j$ and $a_j\neq i$:
        \begin{itemize}
        \item If $a_i<j$ and $a_j<i$, include the route $\alpha_{2,i}\beta_{i,j}\gamma_{j,2}$.
        \item If $a_i<j$ and $a_j>i$, include the route $\alpha_{2,i}\beta_{i,j}\gamma_{j,1}$.
        \item If $a_i>j$ and $a_j<i$, include the route $\alpha_{1,i}\beta_{i,j}\gamma_{j,2}$.
        \item If $a_i>j$ and $a_j>i$, include the route $\alpha_{1,i}\beta_{i,j}\gamma_{j,1}$.
        \end{itemize}
        \item[(ii)] Either $a_i=j$ or $a_j=i$, but not both:
        \begin{itemize}
        \item If $a_i=j$ and $a_j<i$, include the routes $\alpha_{1,i}\beta_{i,j}\gamma_{j,2}$ and $\alpha_{2,i}\beta_{i,j}\gamma_{j,2}$.
        \item If $a_i=j$ and $a_j>i$, include the routes $\alpha_{1,i}\beta_{i,j}\gamma_{j,1}$ and $\alpha_{2,i}\beta_{i,j}\gamma_{j,1}$.
        \item If $a_j=i$ and $a_i<j$, include the routes $\alpha_{2,i}\beta_{i,j}\gamma_{j,1}$ and $\alpha_{2,i}\beta_{i,j}\gamma_{j,2}$.
        \item If $a_j=i$ and $a_i>j$, include the routes $\alpha_{1,i}\beta_{i,j}\gamma_{j,1}$ and $\alpha_{1,i}\beta_{i,j}\gamma_{j,2}$.
        \end{itemize}
        \item[(iii)] $a_i=j$ and $a_j=i$:
    \begin{itemize}
        \item If $a_{i,j}=-$, include the routes $\alpha_{1,i}\beta_{i,j}\gamma_{j,1}$, $\alpha_{1,i}\beta_{i,j}\gamma_{j,2}$, and $\alpha_{2,i}\beta_{i,j}\gamma_{j,2}$.
        \item If $a_{i,j}=+$, include the routes $\alpha_{1,i}\beta_{i,j}\gamma_{j,1}$, $\alpha_{2,i}\beta_{i,j}\gamma_{j,1}$, and $\alpha_{2,i}\beta_{i,j}\gamma_{j,2}$.
        \end{itemize}
    \end{enumerate}
\end{definition}

\begin{example}\label{ex:cliquemap}
    Consider $G(K_{3,2})$ with the canonical bipartite framing as shown in Figure~\ref{fig:framedcompletebipartiteextension}, and consider the clique vector of $K_{3,2}$ given by
\[
\ba=(4,5,4,1,3,+,-,-,-,-,-) \, .
\]
We determine six of the routes in $\phi(\ba)$, which are all shown in Figure~\ref{fig:cliquemapex}.
Note that these are only six of the eleven routes in $\phi(\ba)$, as the full clique contains five additional routes beyond those shown.
The edge $24$ has $a_2=5$ and $a_4=1$.
Thus, we are in case (i) of the definition of $\phi$ with $a_2=5>4$ and $a_4=1<2$. 
This falls within the third subcase of (i), hence $\phi(\ba)$ contains the route $\alpha_{1,2}\beta_{2,4}\gamma_{4,2}$.
The edge $35$ has $a_3=4$ and $a_5=3$, falling into case (ii) of the definition with $a_3=4<5$ and $a_5=3$.
So, this falls within the third subcase of (ii), hence $\phi(\ba)$ contains the routes $\alpha_{2,3}\beta_{3,5}\gamma_{5,1}$ and $\alpha_{2,3}\beta_{3,5}\gamma_{5,2}$.
Finally, the edge $14$ has $a_1=4$ and $a_4=1$ with $a_{1,4}=+$, hence falls within the second subcase of (iii).
Thus, $\phi(\ba)$ contains the routes $\alpha_{1,1}\beta_{1,4}\gamma_{4,1}$, $\alpha_{2,1}\beta_{1,4}\gamma_{4,1}$, and $\alpha_{2,1}\beta_{1,4}\gamma_{4,2}$.

Note that if we had $a_{1,4}=-$ in the clique vector $\ba$, then instead of $\alpha_{2,1}\beta_{1,4}\gamma_{4,1}$, we would include the route $\alpha_{1,1}\beta_{1,4}\gamma_{4,2}$.
Note that the route $\alpha_{1,1}\beta_{1,4}\gamma_{4,1}$ will always be ``above'' the other two routes on $\beta_{1,4}$ in our drawing, while $\alpha_{2,1}\beta_{1,4}\gamma_{4,2}$ will always be ``below'' the other two routes on $\beta_{1,4}$.
Thus, the middle route can either go from a visually upper $\alpha$ edge to a visually lower $\gamma$ edge, in which case we have a ``$-$'' sign, or it can go from a visually lower $\alpha$ edge to a visually upper $\gamma$ edge, in which case we have a ``$+$''.
\end{example}

We next show that $\phi$ is a bijection.

\begin{figure}
\begin{center}
\begin{tikzpicture}[scale=2]
\vertex[fill](a1) at (0,1.95) {};\vertex[fill](a11) at (0,2.1) {};\vertex[fill,label=above:\scriptsize{$1$}](a12) at (0,2.2) {};\vertex[fill,label=below:\scriptsize{$2$}](a2) at (0,1) {};
\vertex[fill,label=below:\scriptsize{$3$}](a3) at (0,0) {};
\vertex[fill](a31) at (0,0.1) {};
\vertex[fill](a4) at (1.5,1.45) {};
\vertex[fill](a41) at (1.5,1.6) {};
\vertex[fill,label=above:\scriptsize{$4$}](a42) at (1.5,1.7) {};
\vertex[fill](a40) at (1.5,1.35) {};
\vertex[fill](a51) at (1.5,0.5) {};
\vertex[fill,label=below:\scriptsize{$5$}](a52) at (1.5,0.4) {};
\vertex[fill,label=left:\scriptsize{$s$}](s) at (-1,1) {};
\vertex[fill,label=right:\scriptsize{$t$}](t) at (2.5,1) {};

\draw[-stealth, thick,densely dashdotdotted] (a1)--(a4);
\draw[-stealth, thick] (a11)--(a41);
\draw[-stealth, thick,dashdotted] (a12)--(a42);
\draw[-stealth, thick,dotted] (a2)--(a40);
\draw[-stealth, thick] (a31)--(a51);
\draw[-stealth, thick,dashed] (a3)--(a52);

\draw[-stealth, thick,dashdotted] (s) to[bend left=30] (a12);
\draw[-stealth, thick] (s) to[bend right=20] (a11);
\draw[-stealth, thick,densely dashdotdotted] (s) to[bend right=30] (a1);
\draw[-stealth, thick,dotted] (s) to[bend left=25] (a2);
\draw[-stealth, thick] (s) to[bend right=35] (a31);
\draw[-stealth, thick,dashed] (s) to[bend right=35] (a3);
\draw[-stealth, thick] (a41) to[bend left=30] (t);
\draw[-stealth, thick,dashdotted] (a42) to[bend left=35] (t);
\draw[-stealth, thick,densely dashdotdotted] (a4) to[bend right=25] (t);
\draw[-stealth, thick] (a51) to[bend left=25] (t);
\draw[-stealth, thick,dotted] (a40) to[bend right=30] (t);
\draw[-stealth, thick,dashed] (a52) to[bend right=35] (t);
\end{tikzpicture}
\end{center}
\caption{The routes from Example~\ref{ex:cliquemap}. For clarity, edges and inner vertices have been duplicated when multiple routes pass through them. Thus, for example, edge $\beta_{3,5}$ is shown twice, once as a solid line and once as a dashed line.}
\label{fig:cliquemapex}
\end{figure}
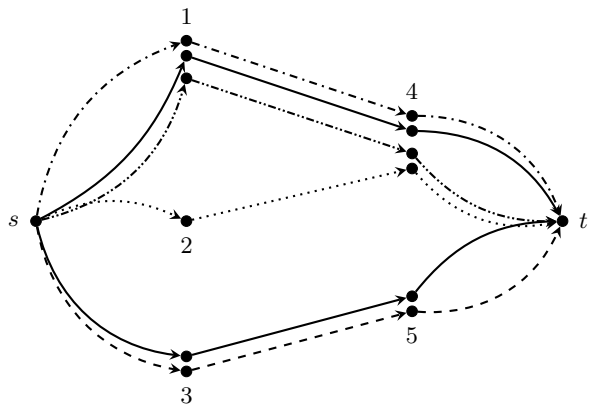

\begin{theorem}\label{thm:cliquebijection}
Given a bipartite graph $H$ with no vertex of degree zero or one, the map $\phi$ is a well-defined bijection.
\end{theorem}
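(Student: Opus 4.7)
The plan is to establish three properties of $\phi$: well-definedness (every $\phi(\ba)$ is a set of pairwise coherent routes), maximality (its cardinality matches $\dim(\calF_1(G(H)))+1$), and bijectivity (via an explicit inverse reading off $a_i, a_j$ as ``transition'' edges).

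For well-definedness, I would check coherence by case analysis on what two routes in $\phi(\ba)$ can share. The central observation is that at each inner vertex $i \in S(H)$, the multiset of $(\alpha_{\cdot,i}, \beta_{i,\cdot})$ pairs used by routes in $\phi(\ba)$ at $i$ forms a monotone non-crossing staircase: routes through $\beta_{i,k}$ with $k < a_i$ all enter via $\alpha_{1,i}$, those with $k > a_i$ all enter via $\alpha_{2,i}$, and the transition edge $\beta_{i,a_i}$ is uniquely where both $\alpha$'s appear. The analogous monotone arrangement holds at each $j \in T(H)$ with $a_j$ as the transition on the $\gamma$-side. Consequently, two routes sharing only an inner vertex are coherent there. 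Two routes sharing a full edge $\beta_{i,j}$ (which can occur only in cases (ii) and (iii)) are coherent as long as the ``anti-diagonal'' pair $\{\alpha_{1,i}\beta_{i,j}\gamma_{j,2},\ \alpha_{2,i}\beta_{i,j}\gamma_{j,1}\}$ does not both appear. In case (ii) this holds automatically because the two included routes share either an $\alpha$ or a $\gamma$; in case (iii), the sign $a_{i,j} \in \{+,-\}$ is designed precisely to record which of these two anti-diagonal routes is excluded. A small check of the remaining pairings (sharing $\alpha$, sharing $\gamma$, sharing only $s$ and $t$) completes coherence.

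For maximality, I would tally $|\phi(\ba)| = t_1 + 2t_2 + 3t_3 = |E(H)| + (t_2 + 2t_3)$, where $t_\ell$ counts the edges of $H$ in case $(\ell)$. Each vertex $v \in V(H)$ contributes its assignment $a_v$ to either a singular edge (case (ii)) or a mutual edge (case (iii)), giving $t_2 + 2t_3 = n + m$. Combined with $\dim(\calF_1(G(H)))+1 = |E(H)|+n+m$ from Proposition~\ref{prop:flowbasics}, this matches, so $\phi(\ba)$ is a maximal clique. For bijectivity, I would define $\psi: \cliques(H) \to \cliquevs(H)$ by reading transitions directly from a maximal clique $C$: set $a_i := k$ where $\beta_{i,k}$ is the unique outgoing edge at $i$ through which both $\alpha_{1,i}$ and $\alpha_{2,i}$ appear in routes of $C$ (existence and uniqueness of this transition follow from the standard fact that the distinct $(\In,\Out)$-pairs at an inner vertex in a DKK maximal clique form a monotone arrangement of size $p+q-1$); similarly set $a_j$ from the framing at $j$. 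For each mutual edge, set $a_{i,j}$ according to which of the two coherent triples through $\beta_{i,j}$ is present in $C$. Checking $\psi \circ \phi = \mathrm{id}$ and $\phi \circ \psi = \mathrm{id}$ then reduces to direct comparison with Definition~\ref{def:cliquemap}.

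The main obstacle I anticipate is executing the coherence case analysis cleanly, since routes sharing a $\beta_{i,j}$ edge are constrained by framings at both $i$ and $j$ simultaneously. The subtlest piece is the sign convention in case (iii): interpreting $a_{i,j}$ as marking which of the two anti-diagonal routes through a doubly-transitional edge is excluded is what makes both the well-definedness argument and the definition of the inverse map come together uniformly.
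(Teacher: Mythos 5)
Your proposal is correct and follows essentially the same architecture as the paper's proof: verify pairwise coherence via a conflict analysis, count $|\phi(\ba)|=n+m+|E(H)|$ to match the maximal clique size from Proposition~\ref{prop:flowbasics}, and then read the clique vector back off a maximal clique by locating, at each inner vertex, the unique ``transition'' edge where both $\alpha$'s (resp.\ both $\gamma$'s) appear. Your coherence analysis (routes sharing only an inner vertex vs.\ routes sharing a $\beta$-edge, with the anti-diagonal pair as the only dangerous conflict) and your counting argument $t_1+2t_2+3t_3=|E(H)|+(t_2+2t_3)=|E(H)|+n+m$ are both sound and parallel the paper's reasoning.

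The one place where you take a shortcut relative to the paper is in the inverse direction: you invoke as a ``standard fact'' that at each inner vertex of a DKK maximal clique the set of $(\In,\Out)$-pairs forms a monotone staircase of size $p+q-1$, from which existence and uniqueness of the transition edge follow. The paper does not cite this; it proves the two consequences it needs from scratch for $G(H)$ --- first that every $\beta_{i,j}$ is used by at least one route of a maximal clique $C$ (via a contradiction argument: if $\beta_{i,j}$ were unused, one could extend $C$), and second that each inner vertex has a unique $\beta$-edge supported by both $\alpha$'s (via coherence plus another maximality contradiction). Your cited fact is true and amounts to the same structural observation, but in the specific setting of this paper it is not a black box one can quote; the rigorous version of your proof would need a paragraph deriving it, at which point the argument would coincide with the paper's. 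Also note that the paper splits bijectivity into a short injectivity argument plus the surjectivity/inverse construction, whereas you fold injectivity into the two-sided inverse check; these are logically equivalent.
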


\begin{proof}
We begin by showing that $\phi$ is well-defined.
Fix a clique vector $\ba$.
We first show that $\phi(\ba)$ has maximal cardinality.
From Proposition~\ref{prop:flowbasics}, it follows that every maximal clique in $G(H)$ contains $n+m+|E(H)|$ routes.
Note that each $\beta_{i,j}$ is contained in at least one route of $\phi(\ba)$, giving $|E(H)|$ routes.
For each vertex $i\in\{1,2,\ldots,n\}$, there is exactly one $j$ with $a_i=j$.
For that edge $ij$, if $a_j\neq i$, then $i$ contributes one additional route from condition (ii) in Definition~\ref{def:cliquemap}.
Similarly, for each $j\in \{n+1,\ldots,n+m\}$, there is exactly one $i$ with $a_j=i$ and if $a_i\neq j$, the vertex $j$ contributes one additional route from condition (ii).
Those edges $ij$ satisfying $a_i=j$ and $a_j=i$ contribute two routes from condition (iii), i.e., one route for each vertex.
Thus, in addition to one route in $\phi(\ba)$ for each $ij\in E(H)$, there is one additional route contributed by each vertex of $H$.
Thus, there are $n+m+|E(H)|$ routes in $\phi(\ba)$.

We next show that these routes are pairwise coherent with respect to the canonical bipartite framing.
Observe that there are two ways for routes to be in conflict in $G(H)$.
The first way is for the pair of routes to be, for a fixed $i$ and $j$, of the form
\[
\alpha_{1,i}\beta_{i,j}\gamma_{j,2}\,
\text{  and  }\,
\alpha_{2,i}\beta_{i,j}\gamma_{j,1} \, .
\]
By construction, at most one of these two routes will be included in $\phi(\ba)$, as they share the edge $\beta_{i,j}$ and Definition~\ref{def:cliquemap} never adds both of these two routes.
The second way for a conflict to arise is a conflict at a fixed vertex, say at vertex $i\in \{1,2,\ldots,n\}$ (the case of $j\in \{n+1,n+2,\ldots,n+m\}$ is handled by an identical argument to what follows).
In this case, we have two routes,
\[
\alpha_{1,i}\beta_{i,k}\gamma_{k,c_1} \quad \text{ and } \quad 
\alpha_{2,i}\beta_{i,j}\gamma_{j,c_2},
\]
with $j<k$.
However, $\alpha_{1,i}$ appears in the first route only when $k\leq a_i$, and $\alpha_{2,i}$ appears in the second route only when $a_i\leq j$.
Combining these two inequalities, we have $k\leq a_i\leq j$, which contradicts the assumption that $j<k$.
Thus, again by the definition of $\phi$, this pair of conflicting routes will never appear in $\phi(\ba)$.

Having established that $\phi$ is well-defined, we next show that $\phi$ is injective.
Suppose for a contradiction that $\ba\neq \ba'$ are two clique vectors; we will show that $\phi(\ba)\neq \phi(\ba')$.
If $a_i=a'_i$ for every $i$ but $a_{i,j}\neq a'_{i,j}$ for some $ij$, then the three routes through $\beta_{i,j}$ constructed in condition (iii) of Definition~\ref{def:cliquemap} would be different, and thus $\phi(\ba)\neq \phi(\ba')$.
If there exists an index $i$ such that $a_i\neq a'_i$ with $i\in \{1,\ldots,n\}$ (the case of $j\in \{n+1,\ldots,n+m\}$ is similar), then in $\phi(\ba)$ there would be routes starting with both $\alpha_{1,i}\beta_{i,a_i}$ and $\alpha_{2,i}\beta_{i,a_i}$, while in $\phi(\ba')$ any routes on $\beta_{i,a_i}$ would either start with $\alpha_{1,i}$ or $\alpha_{2,i}$, but not both.
Thus, $\phi(\ba)\neq \phi(\ba')$, and hence $\phi$ is injective.

To conclude the proof, we need to show that $\phi$ is surjective.
Observe that there are exactly four routes in $G(H)$ passing through $\beta_{i,j}$, which are
\begin{equation}\label{eq:fourroutes}
\alpha_{1,i}\beta_{i,j}\gamma_{j,1} \, , \alpha_{2,i}\beta_{i,j}\gamma_{j,1} \, , \alpha_{1,i}\beta_{i,j}\gamma_{j,2} \, , \text{ and }\alpha_{2,i}\beta_{i,j}\gamma_{j,2} \, .
\end{equation}
Note that the middle two routes are in conflict for the canonical bipartite framing, and thus cannot appear together in a clique.

Let $C\in \cliques(H)$.
We first prove by contradiction that for each edge $ij\in E(H)$, there must be at least one route in $C$ passing through $\beta_{i,j}$.
Suppose not.
If no other edges of $H$ adjacent to $i$ or $j$ are used in routes, then any route through $\beta_{i,j}$ can be added to $C$, contradicting that $C$ is a maximal clique.
Suppose that $\beta_{i,\ell}$ and/or $\beta_{i,g}$ are edges in routes of $C$ such that $\ell<j<g$.
Then the routes in $C$ cannot contain both $\alpha_{2,i}\beta_{i,\ell}$ and $\alpha_{1,i}\beta_{i,g}$, as that would cause a conflict at $i$.
If $\alpha_{h,i}\beta_{i,\ell}$ and/or $\alpha_{h,i}\beta_{i,g}$ are contained in routes in $C$, we can form a partial route starting with $\alpha_{h,i}\beta_{i,j}$.
If $\alpha_{1,i}\beta_{i,\ell}$ and/or $\alpha_{2,i}\beta_{i,g}$ are contained in routes in $C$, we can form a partial route starting with $\alpha_{1,i}\beta_{i,j}$.
A similar analysis of the vertex $j$ shows that we can complete this partial route with an appropriate choice of $\gamma_{j,b}$ without creating any conflicts with other routes of $C$, contradicting maximality of $C$.

Second, we claim that for every vertex $i\in \{1,\ldots,n\}$, there is a unique edge $\beta_{i,j_0}$ that is used in routes in $C$ containing both $\alpha_{1,i}$ and $\alpha_{2,i}$ (the same is true for $j\in\{n+1,\ldots,n+m\}$ with $\gamma_{j,1}$ and $\gamma_{j,2}$, by an identical argument).
We have established that if $N_H(i)=\{j_1,\ldots,j_k\}$, then every edge $\beta_{i,j_\ell}$ is contained in at least one route in $C$.
Further, we know that if $j_\ell<j_p$, then we cannot have both 
\[
\alpha_{1,i}\beta_{i,j_p} \, \text{ and } \, \alpha_{2,i}\beta_{i,j_\ell}
\]
contained in routes in $C$, as they conflict at $i$.
Thus, there exists some $j_0$ such that for $j_\ell<j_0$, we have $\alpha_{1,i}\beta_{i,j_\ell}$ is contained in a route of $C$, and for $j_0<j_p$, we have $\alpha_{2,i}\beta_{i,j_p}$ is contained in a route of $C$; our goal is to show that both $\alpha_{1,j_0}\beta_{i,j_0}$ and $\alpha_{2,j_0}\beta_{i,j_0}$ are contained in routes of $C$.
Suppose $\beta_{i,j_0}$ is contained in only one route in $C$, given without loss of generality by $\alpha_{1,i}\beta_{i,j_0}\gamma_{j_0,h}$ for a fixed $h\in \{1,2\}$.
Then we can also add $\alpha_{2,i}\beta_{i,j_0}\gamma_{j_0,h}$ to $C$ without causing any conflict, contradicting maximality of $C$.

Our goal now is to define a clique vector $\ba$ from $C$ such that $\phi(\ba)=C$.
We first consider any edge $\beta_{i,j}$ such that three routes on this edge are in $C$.
In this case, we define $a_i=j$ and $a_j=i$ and we set $a_{i,j}$ equal to ``$+$'' or ``$-$'' depending on which of the middle two routes in~\eqref{eq:fourroutes} are contained in $C$.
For any edge $\beta_{i,j}$ where we do not have three routes on the edge in $C$, we define $a_{i,j}=-$.
Suppose $\beta_{i,j}$ has two routes through it in $C$. 
In this case, either $\alpha_{1,i}$ and $\alpha_{2,i}$ appear in the routes, or else $\gamma_{j,1}$ and $\gamma_{j,2}$ appear in the routes, but not both.
If it is the former case, set $a_i=j$; if the latter, set $a_j=i$.

Having defined $\ba$ in this manner, it is true by construction that for any edge $\beta_{i,j}$ supporting three routes in $C$, the routes in $C$ are the ones defined by $\phi(\ba)$.
For any edge $\beta_{i,j}$ supporting two routes in $C$, it must be the case that those two routes either have the same $\alpha$ edges or the same $\gamma$ edges, as otherwise the routes would either be in conflict or would support a third coherent route.
Further, whether it is the $\alpha$ edges or the $\gamma$ edges that are the same is in agreement for $\phi(\ba)$ and $C$ by construction.
Thus, what remains is to verify that the edges $\beta_{i,j}$ supporting a single route in $C$ and in $\phi(\ba)$ are the same.
Since the triple- and double-supporting $\beta_{i,j}$ edges determine the entries of $\ba$, all the remaining routes in $\phi(\ba)$ are determined by these.
Similarly, in $C$, once the double- and triple-supported edges have their routes specified, this determines the remaining routes, as the choices of $\alpha$ and $\gamma$ edges in the remaining routes must result in coherence in the clique.
Hence, $\phi(\ba)$ and $C$ are the same clique, and the proof is complete.
\end{proof}

Our next goal is to prove that clique vectors are in bijection with matchings of $W(H)$.
We first set notation for the vertices in $W(H)$ that are not already present in $H$.
Suppose $H$ has a given bipartition $S\uplus T$, and suppose the neighbors of a vertex $i$ are $\{v_1<v_2<\cdots <v_k\}$.
We label each new neighbor of $i$ in $W(H)$ with a unique element of the set $\{w_{i,v_2},\dots,w_{i,v_k}\}$ if $i \in S$ and $\{w_{i,v_1},\dots,w_{i,v_{k-1}}\}$ if $i \in T$.

\begin{definition}\label{def:matchingmap}
    Let $\calM(H)$ denote the set of matchings of the almost-degree-whiskered graph $W(H)$. 
    We define the \emph{matching map} $\psi: \calV(H) \to \calM(H)$ as follows.
    Given $\ba\in \calV(H)$, for each vertex $i$ of $H$,
    \begin{enumerate}
        
        \item if $i \in \{1,\dots,n\}$, and if $a_i = j$ and $a_j = k \neq i$, then include the edge $iw_{i,j}$ (if it exists). 
        
        \item if $j \in \{n+1,\dots,n+m\}$, and if $a_j = i$ and $a_i = k \neq j$, then include the edge $jw_{j,i}$ (if it exists).
        
        \item if $a_i = j$, $a_j = i$, and $a_{i,j} = -$, then include the edge $ij$.
        
        \item if $a_i = j$, $a_j = i$, and $a_{i,j} = +$, then include the edge $iw_{i,j}$ (if it exists) and the edge $jw_{j,i}$ (if it exists). 
    \end{enumerate}    
\end{definition}

\begin{example}\label{ex:matchingmap}
    Recall from Example~\ref{ex:k32cliquevectors} that     
    \[
        \ba=(4,5,4,1,3,-,-,-,-,-,-) \text{ and } \ba' = (4,5,4,1,3,+,-,-,-,-,-)
    \]
    are both valid clique vectors of $G(K_{3,2})$.
    Figure~\ref{fig:psi example} shows the almost-degree-whiskered graph $W(K_{3,2})$ as well as the two corresponding matchings $\psi(\ba)$ and $\psi(\ba')$.

    The edge $14$ is in $\psi(\ba)$ since $a_1 = 4$, $a_4=1$, and $a_{1,4}=-$.
    Since $a_2 = 5$ but $a_5 \neq 2$, $\psi(\ba)$ also contains the edge $2w_{2,5}$.
    However, since $a_3 = 4$, $a_4 \neq 3$, and there is no vertex $w_{3,4}$, $\psi(\ba)$ will not include any edge covering $3$.
    For a similar reason, there is no edge in $\psi(\ba)$ covering $5$.

    The vector $\ba'$ differs from $\ba$ only in $a'_{14}=+$, so $\psi(\ba')$ contains the edge $4w_{4,1}$; note that $1w_{1,4}$ is not present in $W(H)$, and therefore we omit it.
\end{example}

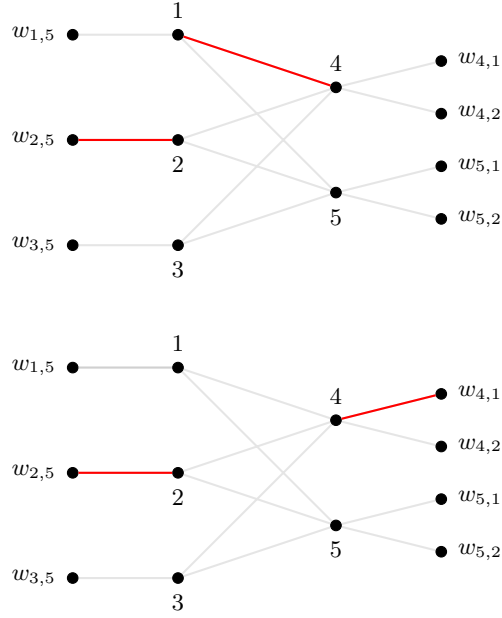
\begin{figure}
\begin{center}
    \begin{tikzpicture}[scale=1.4]
    \begin{scope}
        [yshift=90]
        \vertex[fill,label=left:\scriptsize{$w_{1,5}$}](l1) at (-1,2) {};
        \vertex[fill,label=left:\scriptsize{$w_{2,5}$}](l2) at (-1,1) {};
        \vertex[fill,label=left:\scriptsize{$w_{3,5}$}](l3) at (-1,0) {};
        \vertex[fill,label=right:\scriptsize{$w_{4,1}$}](r41) at (2.5,1.75) {};
        \vertex[fill,label=right:\scriptsize{$w_{4,2}$}](r42) at (2.5,1.25) {};
        \vertex[fill,label=right:\scriptsize{$w_{5,1}$}](r51) at (2.5,0.75) {};
        \vertex[fill,label=right:\scriptsize{$w_{5,2}$}](r52) at (2.5,0.25) {};
        
        \vertex[fill,label=above:\scriptsize{$1$}](a1) at (0,2) {};
        \vertex[fill,label=below:\scriptsize{$2$}](a2) at (0,1) {};
        \vertex[fill,label=below:\scriptsize{$3$}](a3) at (0,0) {};
        \vertex[fill,label=above:\scriptsize{$4$}](b4) at (1.5,1.5) {};
        \vertex[fill,label=below:\scriptsize{$5$}](b5) at (1.5,0.5) {};

        \draw[gray,thick,opacity=0.2] (b5)--(a1)--(b4)--(a2)--(b5)--(a3)--(b4);
        \draw[gray,thick,opacity=0.2,thick] (l1)--(a1);
        \draw[gray,thick,opacity=0.2,thick] (l2)--(a2);
        \draw[gray,thick,opacity=0.2,thick] (l3)--(a3);
        \draw[gray,thick,opacity=0.2,thick] (r41)--(b4);
        \draw[gray,thick,opacity=0.2,thick] (r42)--(b4);
        \draw[gray,thick,opacity=0.2,thick] (r51)--(b5);
        \draw[gray,thick,opacity=0.2,thick] (r52)--(b5);

        \draw[red,thick] (a1)--(b4);
        \draw[red,thick] (l2)--(a2);
    \end{scope}
        \begin{scope}
        \vertex[fill,label=left:\scriptsize{$w_{1,5}$}](l1) at (-1,2) {};
        \vertex[fill,label=left:\scriptsize{$w_{2,5}$}](l2) at (-1,1) {};
        \vertex[fill,label=left:\scriptsize{$w_{3,5}$}](l3) at (-1,0) {};
        \vertex[fill,label=right:\scriptsize{$w_{4,1}$}](r41) at (2.5,1.75) {};
        \vertex[fill,label=right:\scriptsize{$w_{4,2}$}](r42) at (2.5,1.25) {};
        \vertex[fill,label=right:\scriptsize{$w_{5,1}$}](r51) at (2.5,0.75) {};
        \vertex[fill,label=right:\scriptsize{$w_{5,2}$}](r52) at (2.5,0.25) {};
        
        \vertex[fill,label=above:\scriptsize{$1$}](a1) at (0,2) {};
        \vertex[fill,label=below:\scriptsize{$2$}](a2) at (0,1) {};
        \vertex[fill,label=below:\scriptsize{$3$}](a3) at (0,0) {};
        \vertex[fill,label=above:\scriptsize{$4$}](b4) at (1.5,1.5) {};
        \vertex[fill,label=below:\scriptsize{$5$}](b5) at (1.5,0.5) {};

        \draw[gray,thick,opacity=0.2] (b5)--(a1)--(b4)--(a2)--(b5)--(a3)--(b4);
        \draw[gray,thick,opacity=0.2,thick] (l1)--(a1);
        \draw[gray,thick,opacity=0.2,thick] (l2)--(a2);
        \draw[gray,thick,opacity=0.2,thick] (l3)--(a3);
        \draw[gray,thick,opacity=0.2,thick] (r41)--(b4);
        \draw[gray,thick,opacity=0.2,thick] (r42)--(b4);
        \draw[gray,thick,opacity=0.2,thick] (r51)--(b5);
        \draw[gray,thick,opacity=0.2,thick] (r52)--(b5);

        \draw[gray,thick,opacity=0.2] (l1)--(a1);
        \draw[red,thick] (l2)--(a2);
        \draw[red,thick] (b4)--(r41);
    \end{scope}
\end{tikzpicture}
\end{center}

\caption{Two matchings $\psi(\ba)$ and $\psi(\ba')$ of $W(K_{3,2})$.}\label{fig:psi example}
\end{figure}

\begin{theorem}\label{thm:matchingbijection}
Given a bipartite graph $H$ with no vertex of degree zero or one, the map $\psi$ is a well-defined bijection.
\end{theorem}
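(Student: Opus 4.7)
The plan is to prove Theorem~\ref{thm:matchingbijection} by constructing an explicit two-sided inverse $\psi^{-1}: \calM(H) \to \calV(H)$, establishing well-definedness of $\psi$ along the way.

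First, I would verify that $\psi(\ba)$ is indeed a matching of $W(H)$. The four cases in Definition~\ref{def:matchingmap} are mutually exclusive and exhaustive at each vertex $i \in S \cup T$, determined entirely by whether $a_{a_i} = i$ and by the sign $a_{i, a_i}$, so at most one edge of $\psi(\ba)$ is incident to $i$. The whisker vertices $w_{i,j}$ are each incident to only the single edge $iw_{i,j}$ in $W(H)$, so they cannot be doubly covered either, confirming $\psi(\ba) \in \calM(H)$.

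Next, given $M \in \calM(H)$, I would define $\psi^{-1}(M) = \ba$ as follows. For each $i \in \{1,\dots,n\}$: if $i$ is covered in $M$ by an edge $ij$ or $iw_{i,j}$, set $a_i := j$; if $i$ is uncovered in $M$, set $a_i$ to be the smallest neighbor of $i$ in $H$, which is precisely the unique neighbor for which no whisker on $i$ exists. Define $a_j$ symmetrically for $j \in \{n+1,\dots,n+m\}$, with the largest neighbor as the fallback. For each edge $ij \in E(H)$ with $a_i = j$ and $a_j = i$, set $a_{i,j} := -$ if $ij \in M$ and $a_{i,j} := +$ otherwise; set $a_{i,j} := -$ in all other cases. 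By construction, $\ba \in \calV(H)$.

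The verification that $\psi$ and $\psi^{-1}$ are mutually inverse then reduces to a case check against the four rules of Definition~\ref{def:matchingmap}. The main obstacle I anticipate is the bookkeeping around missing whiskers: when $j$ is the smallest neighbor of $i \in S$ the edge $iw_{i,j}$ does not exist, so rules (1) and (4) may produce no edge incident to $i$, and one has to argue that the ``uncovered'' fallback in the inverse definition correctly captures precisely these clique vectors. The most delicate sub-case arises when $j$ is simultaneously the smallest neighbor of $i$ and $i$ is the largest neighbor of $j$: in this configuration, taking $a_i = j$, $a_j = i$, $a_{i,j} = +$ adds neither $iw_{i,j}$ nor $jw_{j,i}$ (both are absent), so both $i$ and $j$ remain uncovered in $\psi(\ba)$, and one must check that the inverse nonetheless reconstructs $a_{i,j} = +$ rather than $a_{i,j} = -$, using the criterion $ij \notin M$ to distinguish case (3) from case (4). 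Once these edge cases are handled, the identities $\psi \circ \psi^{-1} = \mathrm{id}$ and $\psi^{-1} \circ \psi = \mathrm{id}$ follow from chasing the definitions vertex by vertex.
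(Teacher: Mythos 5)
Your proposal is correct and takes essentially the same approach as the paper: you establish well-definedness via the mutual exclusivity of the four rules in Definition~\ref{def:matchingmap}, and your explicit inverse (cover a vertex by the edge hitting it, fall back to the smallest/largest neighbor when uncovered, assign the sign $a_{i,j}$ by whether $ij \in M$) is precisely the preimage construction the paper uses in its surjectivity argument. The only cosmetic difference is that you package the conclusion as verifying a two-sided inverse, whereas the paper proves injectivity directly and then surjectivity via the same inverse construction; you also correctly flag the delicate case of ``absent'' whiskers where $j$ is simultaneously the smallest neighbor of $i \in S$ and $i$ the largest neighbor of $j \in T$, which is the main subtlety.
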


\begin{proof}
    First we show that $\psi$ is well-defined.
    Since $\psi(\ba)$ provides a set of edges of $W(H)$ and each vertex of the form $w_{i,v_j}$ has degree $1$, then each such vertex will have degree at most $1$ in $\psi(\ba)$.
    
    Now, let $i$ be a vertex in $H$ (and therefore also a vertex of $W(H)$).
    The fact that $a_i$ is a single value immediately implies that there is at most one $j$ which can satisfy any of the conditions.
    The degree of $i$ in $\psi(\ba)$ will be greater than $1$ if there is a single vertex $j$ satisfying
    \begin{itemize}
        \item conditions (3) and (4) in Definition~\ref{def:matchingmap}, or
        \item conditions (1) and (3) in Definition~\ref{def:matchingmap}, or
        \item conditions (2) and (3) in Definition~\ref{def:matchingmap}.
    \end{itemize}
    The conditions on $a_j$ and $a_{i,j}$ are mutually exclusive, hence no two of the three conditions can be simultaneously true.
    Thus, the degree of $i$ in $\psi(\ba)$ is also at most $1$, and $\psi(\ba)$ is indeed a matching of $W(H)$.

    Next, we show that $\psi$ is injective.
    Suppose $\ba, \ba' \in \calV(H)$ are distinct.
    If $a_{i,j} \neq a'_{i,j}$ for some $ij$, then one of the matchings $\psi(\ba)$, $\psi(\ba')$ contains $ij$ and the other does not, showing $\psi(\ba) \neq \psi(\ba')$.
    Otherwise, $a_i = j$ and $a_i' = k \neq j$ for some $i$.
    In this case, one of the following situations must hold:
    \begin{itemize}
        \item $ij \in \psi(\ba)$ and $ij \notin \psi(\ba')$ (or vice versa).
        \item $\psi(\ba)$ contains the edge $iw_{i,j}$ (if it exists), while $\psi(\ba')$ contains the edge  $iw_{i,k}$ (if it exists).
    \end{itemize}
    In the first situation, it is clear that $\psi(\ba) \neq \psi(\ba')$.
    In the second, if the edge $iw_{i,j}$ exists in $W(H)$, then it is in $\psi(\ba)$ but not $\psi(\ba')$.
    Similarly, if the edge $iw_{i,k}$ exists in $W(H)$, then it is in $\psi(\ba')$ but not $\psi(\ba)$.
    Since $j \neq k$, at least one of the edges containing $w_{i,j}$ or $w_{i,k}$ must exist.
    So, we can be sure that $\psi(\ba) \neq \psi(\ba')$.
    Therefore, $\psi$ is injective.

    Lastly, we show that $\psi$ is surjective.
    For $M \in \calM(H)$, we will construct a clique vector mapping to it by $\psi$.
    First, we determine the values of each $a_i$.
    If there is any edge of $M$ containing a vertex of the form $w_{i,j}$, then set $a_i = j$. 
    If $i \in S$ and $i$ is not covered in $M$, then set $a_i$ equal to the smallest-label neighbor of $i$.
    If $i \in T$ and $i$ is not covered in $M$, then set $a_i$ equal to the largest-label neighbor of $i$.
    If $ij$ is an edge of $H$ that is also in $M$, then set $a_i = j$ and $a_j = i$.

    Next, we determine the signs $a_{i,j}$ for each edge $ij$ in $H$.
    If $a_i = j$ and $a_j \neq i$, then set $a_{i,j} = -$.
    If $a_i = j$ and $a_j = i$, then check whether $ij$ is in $M$.
    If so, set $a_{i,j} = -$, and if not, then set $a_{i,j} = +$.
    Set all remaining $a_{i,j}$ to $-$.
    By construction, $\psi(\ba) = M$.
    Therefore, $\psi$ is a bijection.
\end{proof}

To help illustrate the surjectivity of $\psi$, consider the matchings illustrated in Figure~\ref{fig:psi example}.
The matching $M$ on the top contains the edge $14$, so we set $a_1 = 4$ and $a_4 = 1$.
Since there is an edge covering $w_{2,5}$, we set $a_2 = 5$.
But $3$ is not covered at all, so we set $a_3 = 4$.
Similarly, we set $a_5 = 3$.
We can immediately set $a_{i,j} = -$ for all $ij \neq 14$ since, for each of these, if $a_i = j$, then $a_j \neq i$.
It turns out that $a_{1,4} = -$ as well, but this time because $M$ contains the edge $14$.
Thus, $\ba := \psi^{-1}(M) = (4,5,4,1,3,-,-,-,-,-,-)$.

For the second matching $M'$ in Figure~\ref{fig:psi example}, we obtain the same values for $a_1,\dots,a_5$ and the same signs for all $a_{i,j}$ with $ij \neq 14$.
This time, since $14 \notin M'$, we use $a_{1,4} = +$, giving that
$\ba' :=\psi^{-1}(M') = (4,5,4,1,3,+,-,-,-,-,-)$.
Note that the clique vectors $\ba$ and $\ba'$ are exactly those we started with in Example~\ref{ex:k32cliquevectors}.

\begin{corollary}\label{cor:cliquematchingbijection}
    There is a bijection between $\cliques(H)$ and $\calM(H)$.    
\end{corollary}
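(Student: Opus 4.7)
The corollary is a direct composition of the two bijections already established. The plan is to observe that $\cliquevs(H)$ serves as a common intermediate parameter set between $\cliques(H)$ and $\calM(H)$: Theorem~\ref{thm:cliquebijection} provides a bijection $\phi \colon \cliquevs(H) \to \cliques(H)$, while Theorem~\ref{thm:matchingbijection} provides a bijection $\psi \colon \cliquevs(H) \to \calM(H)$. Hence the composition $\psi \circ \phi^{-1} \colon \cliques(H) \to \calM(H)$ is the desired bijection.

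In more detail, the only step to write out is that the composition of two bijections is a bijection, which is a standard fact. Since both $\phi$ and $\psi$ are well-defined on the same domain $\cliquevs(H)$, no compatibility check is necessary: given a clique $C \in \cliques(H)$, assign to it the matching $\psi(\phi^{-1}(C)) \in \calM(H)$. Examples~\ref{ex:k32cliquevectors} and~\ref{ex:matchingmap} illustrate this composition concretely for $K_{3,2}$: the clique vector $(4,5,4,1,3,-,-,-,-,-,-)$ and its variant $(4,5,4,1,3,+,-,-,-,-,-)$ are the common preimages under $\phi$ and $\psi$ of the cliques partially depicted in Figure~\ref{fig:cliquemapex} and the matchings shown in Figure~\ref{fig:psi example}, respectively.

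There is no real obstacle here, as both the heavy lifting (well-definedness, injectivity, and surjectivity of $\phi$ and $\psi$) has already been carried out in Theorems~\ref{thm:cliquebijection} and~\ref{thm:matchingbijection}. The proof of the corollary should be a single short sentence invoking these two theorems and taking the composition.
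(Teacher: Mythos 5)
Your proposal is correct and matches the paper's own proof exactly: the paper also takes the composition $\psi \circ \phi^{-1}$, citing Theorems~\ref{thm:cliquebijection} and~\ref{thm:matchingbijection}.
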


\begin{proof}
    The map $\psi\circ\phi^{-1}$ yields the desired bijection.
\end{proof}

\section{Ehrhart $h^*$-Polynomials and Matching Polynomials}\label{sec:ehrhartbijection}

In this section, we use the bijections established in the previous section to prove Theorem~\ref{thm:matchingequalshstar}.
We begin by reviewing the theory of framing lattices, which were introduced by von~Bell and Ceballos~\cite{framinglattices}.
Suppose two routes $R_1$ and $R_2$ of $G(H)$ are incoherent at some vertex $v$.
If this incoherence occurs because $R_1v < R_2v$ and $vR_2 < vR_1$, then we say $R_1$ is \emph{clockwise}, or simply \emph{cw}, from $R_2$ at $v$.
We write this more compactly as $R_1 <_v^{\cw} R_2$.
Additionally, we say that $R_2$ can be obtained from $R_1$ via a \emph{counterclockwise}, or \emph{ccw}, \emph{rotation} at $v$.
Figure~\ref{fig:incoherence} illustrates this using two routes in $G(K_{2,2})$.

One may use these notions to impose a partial order on $\cliques=\cliques(H)$, the maximal cliques of $G(H)$. 
If $C_1, C_2 \in \cliques$, we say $C_2$ is obtained by a \emph{ccw rotation of} $C_1$ if it is obtained by applying a ccw rotation to a single route in $C_1$ at some vertex.
More generally, we define $C_1 \leq_{\rot}^{\ccw} C_2$ if $C_2$ can be obtained from $C_1$ through a sequence of ccw rotations.
This poset, denoted $\scrL_{G,F} = (\cliques, \leq_{\rot}^{\ccw})$, is called the \emph{framing lattice} of the framed graph $(G,F)$.
While von~Bell and Ceballos proved that these are lattices~\cite{framinglattices}, we will only require the fact that this relation yields a partial order on the set of cliques.
Typically we write only $C_1 \leq C_2$ when the meaning is clear from context. 
Since $\scrL_{G,F}$ is a poset on $\cliques$, the set of matchings $(\psi \circ \phi^{-1})(\scrL_{G,F})$ admits an isomorphic poset structure.

\begin{figure}
\begin{center}
\begin{tikzpicture}[scale=2]
\vertex[fill](a11) at (0,1.6) {};
\vertex[fill,label=above:\scriptsize{$1$}](a12) at (0,1.7) {};
\vertex[fill,label=below:\scriptsize{$2$}](a2) at (0,0.4) {};
\vertex[fill,label=above:\scriptsize{$3$}](a42) at (1.5,1.7) {};
\vertex[fill](a51) at (1.5,0.5) {};
\vertex[fill,label=below:\scriptsize{$4$}](a52) at (1.5,0.4) {};
\vertex[fill,label=left:\scriptsize{$s$}](s) at (-1,1) {};
\vertex[fill,label=right:\scriptsize{$t$}](t) at (2.5,1) {};

\draw[-stealth, thick,dashed] (s) to[bend left=30] (a12);
\draw[-stealth, thick] (s) to[bend right=30] (a11);

\draw[-stealth, thick,dashed] (a11) to (a52);
\draw[-stealth, thick] (a12) to (a51);

\draw[-stealth, thick,dashed] (a52) to[bend right=30] (t);
\draw[-stealth, thick] (a51) to[bend left=30] (t);

\end{tikzpicture}
\end{center}
\caption{Two routes, $R_2$ (solid) and $R_1$ (dashed), in $G(K_{2,2})$.
The routes are incoherent at both~$1$ and~$4$  and $R_1$ is cw from $R_2$.}
\label{fig:incoherence}
\end{figure}
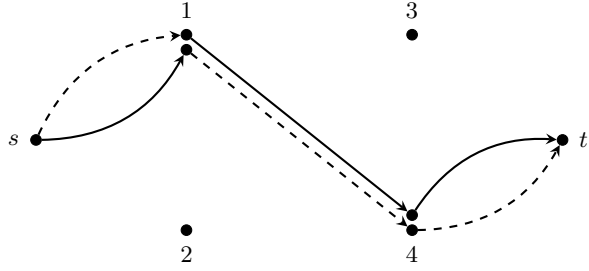

Our proof of Theorem~\ref{thm:matchingequalshstar} will depend on the following lemma that modifies a triangulation to represent a polytope as a disjoint union of half-open simplices.

\begin{lemma}\label{lem:disjointunion}
    Let $P$ be a polytope with a triangulation $\calT$, and suppose that the dual graph of $\calT$ is the Hasse diagram of a poset $Q_\calT$. 
    Let $\Sigma(\calT)$ be the simplicial complex with maximal simplices $\calT$.
    Suppose that for each $S\in\Sigma(\calT)$, there is a unique facet $C_{\max}(S)\in\calT$ with the property that for any $\Delta\in\calT$ with $S\subseteq\Delta$, there exists a saturated chain in $Q_\calT$ of the form 
    \[
        \Delta = \Delta_0 \lessdot \Delta_1 \lessdot \cdots \lessdot \Delta_k = C_{\max}(S) 
    \]
    with $S\subseteq\Delta_i$ for every $i$.
    Then the collection
    \[
        \calT^{\star} = \left\{\Delta \setminus \bigcup_{\Delta \lessdot_{Q_\calT} \Delta'} \Delta' : \Delta \in \calT\right\} 
    \]
    creates a disjoint partition of $P$.
    We refer to the set 
    \[
    \Delta \setminus \bigcup_{\Delta \lessdot \Delta'} \Delta'
    \]
    as a \emph{half-open simplex} in $\calT^\star$ and we refer to the set $\calT^\star$ as a \emph{half-open triangulation}.
\end{lemma}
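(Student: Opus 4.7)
The plan is to show that every point of $P$ lies in exactly one set of $\calT^\star$, using the fact that the hypothesis forces $C_{\max}(S)$ to be a (unique) maximum among facets of $\calT$ containing $S$ in the poset $Q_\calT$.

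First I would observe that, since $\Sigma(\calT)$ is a simplicial complex covering $P$, every $x\in P$ lies in the relative interior of a unique face $S_x\in\Sigma(\calT)$. I will argue that the half-open simplex containing $x$ is precisely the one indexed by $C_{\max}(S_x)$. For existence, note that $S_x\subseteq C_{\max}(S_x)$ gives $x\in C_{\max}(S_x)$. Suppose for contradiction that $x\in\Delta'$ for some $\Delta'$ with $C_{\max}(S_x)\lessdot_{Q_\calT}\Delta'$. Because $x$ lies in the relative interior of $S_x$ and $\Sigma(\calT)$ is a simplicial complex, the unique face of $\Delta'$ having $x$ in its relative interior must coincide with $S_x$, so $S_x\subseteq\Delta'$. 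Applying the hypothesis to $S_x$ and $\Delta'$ yields a saturated chain $\Delta'=\Delta_0'\lessdot\cdots\lessdot\Delta_k'=C_{\max}(S_x)$ in $Q_\calT$; in particular $\Delta'\leq_{Q_\calT} C_{\max}(S_x)$, which contradicts the strict covering $C_{\max}(S_x)\lessdot\Delta'$. Hence $x$ belongs to the half-open simplex of $C_{\max}(S_x)$.

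For uniqueness, suppose $x$ lies in the half-open simplex indexed by some $\Delta\in\calT$. Then $x\in\Delta$, so $S_x\subseteq\Delta$, and by hypothesis there is a saturated chain
\[
    \Delta=\Delta_0\lessdot\Delta_1\lessdot\cdots\lessdot\Delta_k=C_{\max}(S_x)
\]
with $S_x\subseteq\Delta_i$ for every $i$. If $k\geq 1$, then $\Delta\lessdot\Delta_1$ and $x\in S_x\subseteq\Delta_1$, contradicting the defining condition that $x\notin\bigcup_{\Delta\lessdot\Delta'}\Delta'$. Therefore $k=0$ and $\Delta=C_{\max}(S_x)$, so the half-open simplex containing $x$ is unique. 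Combining the two steps shows that $\calT^\star$ partitions $P$.

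The main subtlety is the step asserting that if $x$ is in the relative interior of $S_x$ and $x\in\Delta'$, then $S_x\subseteq\Delta'$; this is a standard fact about simplicial complexes but is the point at which the triangulation hypothesis on $\calT$ is used. The rest of the argument is a formal manipulation of the saturated-chain condition, which is precisely engineered to make $C_{\max}(S_x)$ the unique facet at which the new half-open piece places points whose minimal face is $S_x$.
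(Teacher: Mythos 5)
Your proof is correct and follows the same strategy as the paper's: reduce to the observation that every point of $P$ lies in the relative interior of a unique face $S$ of $\Sigma(\calT)$, then use the saturated-chain hypothesis to show that $C_{\max}(S)$ is the only simplex whose half-open version retains that relative interior. You actually spell out both halves more explicitly than the paper does: the paper's proof argues carefully that the relative interior of $S$ is removed from the half-open simplex of every $\Delta\neq C_{\max}(S)$ (your uniqueness step), but it asserts rather than proves that these points survive in the half-open simplex for $C_{\max}(S)$. Your existence argument, deriving a contradiction between $\Delta'\leq_{Q_\calT} C_{\max}(S_x)$ (from the saturated chain) and $C_{\max}(S_x)\lessdot\Delta'$, cleanly closes that gap, so your write-up is, if anything, slightly more complete than the one in the paper.
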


\begin{proof}
    We must show that for any face $S\in\Sigma(\calT)$, the relative interior of $S$ is contained in a unique half-open simplex.
    Let $S\subseteq\Delta\neq C_{\max}(S)$.
    Then there exists a saturated chain 
    \[
        \Delta = \Delta_0 \lessdot \Delta_1 \lessdot \cdots \lessdot \Delta_k = C_{\max}(S),
    \]
    where $S\subseteq\Delta_1$.
    In $\calT^\star$, the points in $\Delta_1$ are removed when producing the half-open simplex corresponding to $\Delta$.
    Hence, $S$ is not contained in $\Delta$ for any $\Delta\neq C_{\max}(S)$.
    Since every point in $P$ is contained in the relative interior of a unique $S\in\Sigma(\calT)$, every point in $P$ is contained in a unique half-open simplex in $\calT^\star$, as desired. 
\end{proof}

We will be applying Lemma~\ref{lem:disjointunion} to DKK triangulations, which are unimodular.
The following lemma, which follows immediately from~\cite[Section 3.1]{kvbarvinok}, provides the link we will need to Ehrhart theory.

\begin{lemma}\label{lem:ehrharthalfopensimplex}
    Let $\Delta$ be a full-dimensional unimodular simplex in $\R^d$.
    Denote by $\Delta^{(k)}$ the half-open unimodular simplex obtained from $\Delta$ by removing $k$ facets.
    The Ehrhart series of $\Delta^{(k)}$ is $\frac{z^k}{(1-z)^{d+1}}$.
\end{lemma}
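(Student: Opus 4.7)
The plan is to compute the Ehrhart series of $\Delta^{(k)}$ directly by enumerating lattice points in each dilate, exploiting the barycentric structure that unimodularity provides. First, I would use the standard fact that if $\Delta$ is a full-dimensional unimodular simplex in $\R^d$ with vertices $v_0, v_1, \ldots, v_d$, then every lattice point of $t\Delta$ can be written uniquely as $\sum_{i=0}^d \lambda_i v_i$ with $\lambda_i \in \Z_{\geq 0}$ and $\sum_{i=0}^d \lambda_i = t$. This identification is what unimodularity buys us: the map from the standard simplex in $\R^{d+1}$ (intersected with $\sum \lambda_i = t$) to $t\Delta$ is a bijection on lattice points. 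From this we recover the well-known identity $i(\Delta;t) = \binom{t+d}{d}$ with Ehrhart series $1/(1-z)^{d+1}$.

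Next, I would identify the facets of $\Delta$ with barycentric coordinates: the facet opposite $v_i$ is precisely the locus where $\lambda_i = 0$. Labeling things so that the $k$ removed facets are those opposite $v_0,\ldots,v_{k-1}$, the lattice points of $t\Delta^{(k)}$ correspond to tuples $(\lambda_0,\ldots,\lambda_d) \in \Z_{\geq 0}^{d+1}$ with $\sum \lambda_i = t$, subject to $\lambda_0,\ldots,\lambda_{k-1} \geq 1$. The substitution $\mu_i = \lambda_i - 1$ for $i < k$ and $\mu_i = \lambda_i$ for $i \geq k$ then converts this into counting nonnegative integer solutions to $\sum_{i=0}^d \mu_i = t - k$, yielding
\[
  i(\Delta^{(k)};t) = \binom{t - k + d}{d}
\]
for $t \geq k$ and $0$ otherwise. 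Summing the generating function gives
\[
  E(\Delta^{(k)};z) = \sum_{t\geq k}\binom{t-k+d}{d}z^t = z^k \sum_{s\geq 0}\binom{s+d}{d}z^s = \frac{z^k}{(1-z)^{d+1}},
\]
as claimed.

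There is no significant obstacle in this argument; the computation is routine once unimodularity is invoked to pass to barycentric coordinates. The only subtlety worth spelling out carefully is the precise meaning of ``removing $k$ facets'': we mean removing the relative interiors of the union of those facets (together with their boundaries shared with removed facets), so the remaining half-open region corresponds exactly to the barycentric condition that the $k$ selected coordinates be strictly positive. With that convention pinned down, the substitution step is immediate and the computation of the generating function follows from the binomial series.
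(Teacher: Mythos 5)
Your argument is correct and complete. The paper itself does not prove this lemma; it simply cites it as following immediately from a section of a reference (Karpman--Vergne/Barvinok type material). So in a sense you have done more work than the authors, who treat this as known. Your approach is the standard one: use unimodularity to pass to barycentric coordinates, identify each removed facet with the condition that the corresponding barycentric coordinate vanish, impose strict positivity on the $k$ chosen coordinates, shift by one to get a count of nonnegative solutions to $\sum \mu_i = t - k$, and then sum the resulting binomial coefficients to get $z^k/(1-z)^{d+1}$. The parenthetical remark about what ``removing $k$ facets'' means is slightly overcomplicated: since you are removing closed facets, the half-open region is simply $\{\lambda_0 > 0, \ldots, \lambda_{k-1} > 0\}$, and there is no need to separately worry about lower-dimensional intersections --- they are automatically excluded. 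With that minor simplification, the argument is exactly the elementary computation one would want to include if the paper chose to spell the lemma out rather than cite it.
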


Given a finite poset $Q$, for an element $q\in Q$, let $\cov_Q(q)$ denote the number of elements in $Q$ that cover $q$.
Recall that, for a triangulation $\calT$, the graph of $\calT$ is the graph whose vertices are the simplices in $\calT$ and $\Delta,\Delta' \in \calT$ form an edge if and only if $\Delta$ and $\Delta'$ share a facet. 

\begin{corollary}\label{cor:hstarcovering}
    Suppose $P$ is a lattice polytope with a unimodular triangulation $\calT$ such that the dual graph of $\calT$ is the Hasse diagram of a poset $Q_\calT$ satisfying the hypotheses of Lemma~\ref{lem:disjointunion}.
    Then
    \[
    h^*(P;z)=\sum_{\Delta\in\calT}z^{\cov_{Q_\calT}(\Delta)} \, .
    \]
\end{corollary}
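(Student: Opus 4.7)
The plan is to combine Lemma~\ref{lem:disjointunion} with Lemma~\ref{lem:ehrharthalfopensimplex} and use additivity of the Ehrhart series over disjoint lattice point sets. Concretely, I would first apply Lemma~\ref{lem:disjointunion} to write
\[
P = \bigsqcup_{\Delta \in \calT} \Delta^{\star}, \quad \text{where} \quad \Delta^{\star} := \Delta \setminus \bigcup_{\Delta \lessdot_{Q_\calT} \Delta'} \Delta',
\]
so that for every positive integer $t$ the dilate $tP$ decomposes as a disjoint union $tP = \bigsqcup_{\Delta \in \calT} t\Delta^{\star}$, and hence
\[
i(P;t) = \sum_{\Delta \in \calT} |t\Delta^{\star} \cap \Z^n|.
\]

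Next I would identify the number of facets removed from $\Delta$ to form $\Delta^{\star}$. Because $\calT$ is a simplicial complex and the dual graph of $\calT$ is the Hasse diagram of $Q_\calT$, the maximal simplices $\Delta'$ with $\Delta \lessdot_{Q_\calT} \Delta'$ are in bijection with a distinct subset of facets of $\Delta$ (namely, the shared facets $\Delta \cap \Delta'$). Thus $\Delta^{\star}$ is obtained from $\Delta$ by deleting exactly $\cov_{Q_\calT}(\Delta)$ facets. Since $\calT$ is a unimodular triangulation of $P$, each $\Delta$ is a full-dimensional unimodular simplex in the affine span of $P$, so Lemma~\ref{lem:ehrharthalfopensimplex} (applied with $d = \dim P$) yields
\[
\sum_{t \geq 0} |t\Delta^{\star} \cap \Z^n|\, z^t = \frac{z^{\cov_{Q_\calT}(\Delta)}}{(1-z)^{\dim(P)+1}}.
\]

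Summing over $\Delta \in \calT$ then gives
\[
E(P;z) = \sum_{\Delta \in \calT} \frac{z^{\cov_{Q_\calT}(\Delta)}}{(1-z)^{\dim(P)+1}} = \frac{\sum_{\Delta \in \calT} z^{\cov_{Q_\calT}(\Delta)}}{(1-z)^{\dim(P)+1}},
\]
and comparing numerators with the defining expression of the Ehrhart series yields the claimed identity for $h^*(P;z)$.

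The only nontrivial point is the bookkeeping step that $\cov_{Q_\calT}(\Delta)$ equals the number of facets of $\Delta$ removed in forming $\Delta^{\star}$. This relies on two facts: first, that distinct covers $\Delta \lessdot \Delta'$ meet $\Delta$ in distinct facets (which holds since each pair of adjacent maximal simplices in a triangulation shares a unique codimension-one face, and the dual graph is by hypothesis the Hasse diagram rather than just a subgraph of it); and second, that these facets collectively are exactly what the union in the definition of $\Delta^{\star}$ removes, which is immediate from the definition. Once that identification is in hand, the rest of the argument is purely formal.
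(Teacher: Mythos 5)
Your proof takes exactly the same route as the paper's: apply Lemma~\ref{lem:disjointunion} to get a disjoint partition of $P$ into half-open simplices, observe that the number of removed facets of $\Delta$ equals $\cov_{Q_\calT}(\Delta)$, and then invoke Lemma~\ref{lem:ehrharthalfopensimplex} together with additivity of Ehrhart series to conclude. The paper's version is more terse; your write-up spells out the bookkeeping that distinct covers of $\Delta$ in $Q_\calT$ correspond to distinct facets of $\Delta$ (via the shared codimension-one faces and the hypothesis that the dual graph \emph{is} the Hasse diagram), which the paper takes as implicit. That clarification is worth having, but it is not a different argument.
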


\begin{proof}
    The $h^*$-polynomial of $P$ is the sum of the $h^*$-polynomials of the half-open simplices in $\calT^\star$, since $\calT^\star$ is a disjoint union of half-open simplices having the same dimension as $P$.
    The number of facets removed from $\Delta\in\calT$ to produce the corresponding half-open simplex in $\calT^\star$ is $\cov_{Q_\calT}(\Delta)$, and this completes the proof.
\end{proof}

To finish the proof of Theorem~\ref{thm:matchingequalshstar}, we need to show two things regarding $G=G(H)$.
First, we need to show that the DKK triangulation of $\calF_1(G)$ satisfies the hypotheses of Lemma~\ref{lem:disjointunion} when using the framing lattice $\scrL_{G,F}$ for the canonical bipartite framing.
Second, we need to show that the number of cliques covering $C$ in $\scrL_{G,F}$ is given by the number of edges in the matching $\psi\circ\phi^{-1}(C)$, i.e., that
\[
\cov_{\scrL_{G,F}}(C)=|\psi\circ\phi^{-1}(C)| \, .
\]
The following proposition is our key tool for this endeavor, as it characterizes the cover relation in the framing lattice for $G(H)$ in the context of clique vectors.

\begin{proposition}\label{prop:cover}
    Let $\ba, \bb$ be clique vectors of $G(H)$ with respect to the canonical bipartite framing.
    Then $\phi(\ba) \lessdot \phi(\bb)$ if and only if exactly one of the following hold:
    \begin{enumerate}
        \item $\ba$ and $\bb$ differ in exactly one entry $a_i \neq b_i$, and
            \begin{enumerate}
                \item $i \in \{1,\dots,n\}$,
                \item $a_i = j$ and $a_j \neq i$, and
                \item $b_i$ is the largest neighbor of $i$ less than $j$.
            \end{enumerate}
        \item $\ba$ and $\bb$ differ in exactly one entry $a_j \neq b_j$, and
            \begin{enumerate}
                \item $j \in \{n+1,\dots,n+m\}$,
                \item $a_i \neq j$ and $a_j = i$, and
                \item $b_j$ is the smallest neighbor of $j$ greater than $i$.
            \end{enumerate}
        \item $\ba$ and $\bb$ differ only in that $a_{i,j} = -$ while $b_{i,j} = +$.
        \item $\ba$ and $\bb$ differ in exactly two entries as follows:
            \begin{enumerate}
                \item $a_{i,j} = +$ while $b_{i,j} = -$, and
                \item either $b_i$ is the largest neighbor of $i$ that is less than $j$ or $b_j$ is the smallest neighbor of $j$ that is greater than $i$.
            \end{enumerate}
    \end{enumerate}
\end{proposition}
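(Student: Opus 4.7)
The plan is to exploit the correspondence between cover relations in $\scrL_{G,F}$ and single ccw rotations: $\phi(\ba) \lessdot \phi(\bb)$ holds precisely when $\phi(\bb) = (\phi(\ba)\setminus\{R_1\})\cup\{R_2\}$ for routes $R_1 \in \phi(\ba)$ and $R_2 \in \phi(\bb)$ with $R_1 <_v^{\cw} R_2$ at some subroute $v$. I would then analyze both directions of the biconditional via a trichotomy on $v$: a single vertex in $S(H)$, a single vertex in $T(H)$, or a shared edge $\beta_{i,j}$.

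For the forward direction, in each case I apply Definition~\ref{def:cliquemap} to both $\ba$ and $\bb$, compute the symmetric difference $\phi(\ba) \triangle \phi(\bb)$, and verify that it consists of exactly one removed and one added route forming a conflicting pair. Case~(1) produces a rotation at vertex $i$ swapping $R_1 = \alpha_{1,i}\beta_{i,j}\gamma_{j,c}$ for $R_2 = \alpha_{2,i}\beta_{i,j'}\gamma_{j',c'}$; the maximality of $j' < j$ matches exactly the coherence requirement on $R_2$ with respect to the surviving $\alpha_{1,i}$-routes at $i$. Case~(2) is symmetric via $j$. Case~(3) is a rotation at the subroute $\beta_{i,j}$ that swaps $\alpha_{1,i}\beta_{i,j}\gamma_{j,2}$ with $\alpha_{2,i}\beta_{i,j}\gamma_{j,1}$ within the case~(iii) triple. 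Case~(4) ejects a single route from a case~(iii) triple at edge $ij$ with $a_{i,j}=+$: the $b_i$-change variant ejects $\alpha_{1,i}\beta_{i,j}\gamma_{j,1}$ via a rotation at $i$, and the $b_j$-change variant ejects $\alpha_{2,i}\beta_{i,j}\gamma_{j,2}$ via a rotation at $j$. In either variant, edge $ij$ reclassifies from case~(iii) to case~(ii) in $\bb$, which by Definition~\ref{def:cliquevector} forces $b_{i,j}=-$.

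For the reverse direction, given a cover with swap $(R_1,R_2)$ at subroute $v$, if $v = i \in S(H)$ then writing the local pairs $(\alpha_{h_1,i},\beta_{i,k_1})$ and $(\alpha_{h_2,i},\beta_{i,k_2})$ for $R_1$ and $R_2$ with $h_1 < h_2$ and $k_1 > k_2$, coherence of $R_2$ with the surviving $\alpha_{1,i}$-routes in $\phi(\ba) \setminus \{R_1\}$ forces $k_2$ to be the largest neighbor of $i$ less than $k_1$. This yields case~(1) when $a_{k_1} \neq i$, and case~(4) with $b_i$-change when $a_{k_1} = i$; in the latter the sign $a_{i,k_1}=+$ is forced because any single-route ejection from a case~(iii) $-$ triple other than the case~(3) swap leaves an incoherent pair on $\beta_{i,k_1}$. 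A rotation at $v \in T(H)$ is symmetric, giving case~(2) or case~(4) with $b_j$-change. If $v$ is the subroute $\beta_{i,j}$, the only conflicting pair of routes through this edge is $\alpha_{1,i}\beta_{i,j}\gamma_{j,2}$ and $\alpha_{2,i}\beta_{i,j}\gamma_{j,1}$; both being present in their respective cliques forces the case~(iii) configuration in both, giving case~(3).

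The main obstacle is case~(4), where a single rotation simultaneously changes both a neighbor coordinate and a sign in the clique vector. The coupling arises because Definition~\ref{def:cliquevector} only permits a $+$ sign when the edge sits in case~(iii), so an ejection that reconfigures edge $ij$ from case~(iii) to case~(ii) mechanically forces the sign to flip to $-$. Verifying that only the $+$-sign triple admits such ejections, and cataloging precisely which route can be removed from each triple while preserving coherence with the rest of the clique, is the technical core of the argument.
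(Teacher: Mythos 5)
Your proposal is correct and follows essentially the same strategy as the paper's proof: identify cover relations in the framing lattice with single ccw rotations, trichotomize the rotation by whether the conflicting pair shares only a source-side vertex $i$, only a sink-side vertex $j$, or an entire $\beta_{i,j}$ edge, and then catalog how the three-route/two-route/one-route configurations on the affected $\beta$-edges interact with Definition~\ref{def:cliquemap} to produce exactly the four listed conditions. Your handling of the case-(4) coupling between the sign flip and the neighbor shift, and the observation that reclassifying an edge from case~(iii) to case~(ii) mechanically forces the sign to $-$, is precisely the crux the paper works through in its Case~(B) subcases.

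One point to be explicit about when you fill in details: both you and the paper implicitly rely on the equivalence between cover relations in $\scrL_{G,F}$ and adjacencies in the dual graph of the DKK triangulation (equivalently, single ccw rotations that do not factor through an intermediate clique). This identification is not re-proved here and should be cited to von~Bell and Ceballos. With that in hand, showing that the symmetric difference is a single conflicting pair in your "forward" direction, and that a single rotation decomposes into exactly one of the four clique-vector moves in your "reverse" direction, does the job. The work is in the subcase accounting — verifying that configurations such as ``$\beta_{i,j'}$ used twice with $a_{j'}=i$ and $a_i>j'$'' force additional route changes and so cannot arise — which you correctly flag as the technical core but do not carry out; the paper does this exhaustively in its Cases (A)–(C).
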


\begin{proof}
    First suppose $\phi(\ba) \lessdot \phi(\bb)$ and let $S = \phi(\ba) \cap \phi(\bb)$.
    This means $\phi(\ba)$ and $\phi(\bb)$ are maximal cliques of $G(H)$ such that
    \begin{equation}\label{eq: clique difference}
       S = \phi(\ba) \setminus \{R_\ba\} = \phi(\bb) \setminus \{R_\bb\}
    \end{equation}
    for some routes $R_\ba \in \phi(\ba)$ and $R_\bb \in \phi(\bb)$.
    These routes satisfy $R_\ba <^{\cw} R_\bb$.
    This can occur in three ways:
    \begin{enumerate}
        \item[(A)] $R_\ba = \alpha_{1,i}\beta_{i,j}\gamma_{j,2}$ and $R_\bb = \alpha_{2,i}\beta_{i,j}\gamma_{j,1}$ for some edge $ij \in H$;
        \item[(B)] $R_\ba = \alpha_{1,i}\beta_{i,j'}\gamma_{j',*}$ and $R_\bb = \alpha_{2,i}\beta_{i,j}\gamma_{j,*}$ for some $n+1 \leq j < j' \leq n+m$; or
        \item[(C)] $R_\ba = \alpha_{*,i}\beta_{i,j}\gamma_{j,2}$ and $R_\bb = \alpha_{*,i'}\beta_{i',j}\gamma_{j,1}$ for some $1 \leq i < i' \leq n$. 
    \end{enumerate}

    \textbf{Case (A)}: if $\beta_{i,j}$ is used once in $\phi(\ba)$ and $\phi(\bb)$, then $a_i > j$ and $b_i < j$.
    However, by \eqref{eq: clique difference}, this means the set of routes passing through $\beta_{i,a_i}$ is the same in $\phi(\ba)$ and $\phi(\bb)$, hence $a_i = b_i$, which is impossible. 
    
    Now suppose $\beta_{i,j}$ is used twice in each of $\phi(\ba)$ and $\phi(\bb)$.
    By \eqref{eq: clique difference}, either both $a_i = j$ and $a_j< i$ or both $a_i>j$ and and $a_j = i$. 
    Without loss of generality, suppose the latter. 
    This means $b_i = j$ and $b_j > i$.
    However, this means $\beta_{i,a_i}$ is used a different number of times in $\phi(\ba)$ and in $\phi(\bb)$, contradicting \eqref{eq: clique difference}.

    Lastly, suppose $\beta_{i,j}$ appears three times in each of $\phi(\ba)$ and $\phi(\bb)$.
    Then $a_i = b_i = j$, $a_j = b_j = i$, and $a_{i,j} = -b_{i,j} = -$.
    By definition, both $\ba$ and $\bb$ are valid clique vectors.
    This falls into condition \emph{(3)} of the proposition statement.

    \textbf{Case (B)}: Since $\phi(\bb)$ is obtained from $\phi(\ba)$, $\beta_{i,j}$ is part of at most two routes in $\phi(\ba)$ and $\beta_{i,j'}$ is part of at least two routes in $\phi(\ba)$.
    First suppose $\beta_{i,j}$ appears in one route in $\phi(\ba)$ and $\beta_{i,j'}$ is in two.
    One way this can happen is if $a_i > j'$ and $a_{j'} = i$.
    When this happens, $b_{j'} \neq i$.
    However, \eqref{eq: clique difference} implies $b_{j'} \neq k$ for all $k \neq i$ as well, so this cannot be the case.

    The other way in which $\beta_{i,j}$ appears once and $\beta_{i,j'}$ appears twice is when $a_i = j'$ and $a_{j'} \neq i$.
    In each of the four cases of $a_j,a_{j'}$ being greater than or less than $i$, we find that $b_i = j$, $b_k = a_k$ for all $k \neq i$, and $b_{k,l} = a_{k,l}$ for all $k,l$.
    As a result, $\bb$ is a valid clique vector.

    In order for $\phi(\bb)$ to cover $\phi(\ba)$, we must have that $j$ is the largest neighbor of $i$ that is less than $j'$. 
    This falls into condition \emph{(1)} of the statement of the proposition.

    Next suppose that $\beta_{i,j}$ and $\beta_{i,j'}$ each appear in exactly two routes in $\phi(\ba)$. 
    This occurs when $a_i = b_i = j$, $b_j = i$, $a_j \neq i$, and $a_{j'} = i$.
    As a result, one route in $\phi(\ba)$ containing $\beta_{i,j}$ must begin with $\alpha_{1,j}$ and the other begins with $\alpha_{2,i}$, and both routes containing $\beta_{i,j'}$ must both begin with $\alpha_{2,i}$.
    However, this does not allow any route to be chosen as $R_\ba$ in a way that is consistent with the conditions assumed in case (B).
    So, this cannot occur.

    Now suppose $\beta_{i,j}$ is used in one route of $\phi(\ba)$ and $\beta_{i,j'}$ is used in three.
    This means $a_i = j'$, $a_{j'} = i$, and $a_j \neq i$.
    In order for $R_\bb$ to be coherent with the remaining routes passing through $i$, it must hold that $a_{i,j'} = +$, $b_i < j'$, and $b_{j'} = i$.
    Consequently, $b_i = j$ and $b_j = a_j$, forcing $b_{i,j'} = -$ and all other entries of $\bb$ are the same as those of $\ba$. 
    For $\phi(\bb)$ to cover $\phi(\ba)$, $j$ must be the largest neighbor of $i$ that is smaller than $j'$.
    It follows that $\bb$ is a valid clique vector, falling into condition \emph{(4)} of the proposition statement.

    Lastly, if $\beta_{i,j}$ is used twice and $\beta_{i,j'}$ is used three times in $\phi(\ba)$, then $a_i = j'$, $a_j = a_{j'} = i$, and $a_{i,j'} = +$.
    This forces $b_i = j$, $b_j = b_{j'} = i$, and $b_{i,j} = -$.
    This again results in a $\bb$ being a valid clique vector and falling into condition \emph{(4)} of the proposition statement. 

    \textbf{Case (C)}: This case is symmetric to Case (B).
    In particular, this case accounts for condition \emph{(2)} in the statement of the proposition and for the remaining part of condition \emph{(4)}.

    To establish the converse, suppose $\ba$ and $\bb$ satisfy one of the conditions in the statement of the proposition.
    The argument for verifying $\phi(\ba) \lessdot \phi(\bb)$ is similar enough for each condition, so we only present the argument for one of them.
    
    Suppose condition \emph{(1)} is satisfied, and without loss of generality suppose $a_j < i$.
    By Definition~\ref{def:cliquemap}, the clique $\phi(\ba)$ will contain the routes
    \[
        \alpha_{1,i}\beta_{i,j}\gamma_{j,2}~\text{ and }~\alpha_{2,i}\beta_{i,j}\gamma_{j,2},
    \]
    and $\phi(\bb)$ will only contain $\alpha_{2,i}\beta_{i,j}\gamma_{j,2}$.
    It is possible to have $a_k = a_{b_i} < i$, $a_k = i$, or $a_k > i$; again without loss of generality, suppose $a_k < i$.
    This means the only route containing $\beta_{i,k}$ in $\phi(\ba)$ is $\alpha_{1,i}\beta_{i,k}\gamma_{k,2}$, while $\phi(\bb)$ contains the routes
    \[
        \alpha_{1,i}\beta_{i,k}\gamma_{k,2}~\text{ and }~\alpha_{2,i}\beta_{i,k}\gamma_{k,2}.
    \]
    
    Since $a_i$ and $b_i$ are the only entries of $\ba$ and $\bb$ which disagree, all other routes of $\phi(\ba)$ and $\phi(\bb)$ are the same.
    That is,
    \[
        \phi(\ba) \setminus \phi(\bb) = \{\alpha_{1,i}\beta_{i,j}\gamma_{j,2}\}~\text{ and }~\phi(\bb) \setminus \phi(\ba) = \{\alpha_{2,i}\beta_{i,k}\gamma_{k,2}\}.
    \]
    Since $\alpha_{1,i}\beta_{i,j}\gamma_{j,2} <_i^{\cw} \alpha_{2,i}\beta_{i,k}\gamma_{k,2}$, it follows that $\phi(\ba) \lessdot \phi(\bb)$.
\end{proof}

Proposition~\ref{prop:cover} has a combinatorial interpretation in terms of the matching $\psi(\ba)$.
For leaf edges of $W(H)$ incident to $i\in\{1,2,\ldots,n\}$, condition \emph{(1)} in the proposition states that moving the leaf edge ``up'' to the next leaf edge will result in a cover relation, unless doing so would result in the presence of the leaves $iw_{i,j}$ and $jw_{j,i}$, in which case those leaves are removed and the edge $ij$ is included.
Note that this might include situations where the updated matching has an ``absent'' leaf, such as the non-existence of the edge $1w_{1,4}$ in the matching $\psi(\ba')$ from Example~\ref{ex:matchingmap}.

For leaf edges of $W(H)$ incident to one of $j\in\{n+1,n+2,\ldots,n+m\}$, condition \emph{(2)} in the proposition states that moving the leaf edge ``down'' to the next leaf edge will result in a cover relation, unless doing so would result in the presence of the leaves $iw_{i,j}$ and $jw_{j,i}$, in which case those leaves are removed and the edge $ij$ is included.
Note that as with condition \emph{(1)}, this might include situations where the updated matching has an ``absent'' leaf.

Condition \emph{(3)} states that if the edge $ij\in E(H)$ is present in the matching, then it can be removed and replaced by the leaf edges $iw_{i,j}$ and $jw_{j,i}$; however, it is possible that one of these edges might correspond to an ``absent'' leaf such as mentioned with condition \emph{(1)}, in which case that leaf is simply omitted.
Finally, condition \emph{(4)} states that in the situation where the matching contains a pair of leaf edges $iw_{i,j}$ and $jw_{j,i}$, where we must also consider the possibility of one of these leaves being an ``absent'' leaf in $W(H)$, then either the left leaf edge can be moved ``up'' or the right leaf edge can be moved ``down''.

\begin{example}
    Consider the matching $\psi(\ba)$ from Example~\ref{ex:matchingmap}.
    For this matching, the edge $14$ could be replaced by the edge $4w_{4,1}$, where we omit the ``absent'' leaf $1w_{1,4}$ that would otherwise be indicated.
    This would correspond to changing $a_{1,4}$ from $-$ to $+$.
    
    Another option would be that $2w_{2,5}$ could be removed; it would be expected to be replaced by $2w_{2,4}$, but this is an ``absent'' leaf in $W(H)$ and therefore is omitted.
    This would be the result of moving $2w_{2,5}$ ``up'' among the leaf vertices, and would correspond to changing $a_2$ from $5$ to $4$.

    Observe that there is no value to replace $a_3=4$ with as there is no value in $\{4,5\}$ that is less than $4$. 
    This is mirrored in the matching by the fact that the leaf edge at vertex $3$ is already missing.
    
    Similarly, there is no option to shift the leaf edges at vertex $5$, since $a_5=3$ and there is no value in $\{1,2,3\}$ greater than $3$, which corresponds in the matching to the fact that there is no leaf edge present in the matching.

    Note that in the matching $\psi(\ba')$ from Example~\ref{ex:matchingmap}, it is now possible to move $4w_{4,1}$ to $4w_{4,2}$, which would correspond to changing $a_{1,4}$ from $+$ to $-$ and changing $a_4$ from $1$ to $2$.

    Each of the above adjustments to matchings corresponds to a cover relation among their associated clique vectors.
\end{example}

\begin{corollary}\label{cor:dualgraphsatisfieslemma}
Let $G = G(H)$ and let $F$ be the canonical bipartite framing for $G$.
The poset $\scrL_{G,F}$ satisfies the hypotheses of Lemma~\ref{lem:disjointunion}.
\end{corollary}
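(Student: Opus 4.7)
The plan is to exploit Proposition~\ref{prop:cover} to track each cover relation in $\scrL_{G,F}$ via concrete modifications to clique vectors, and to explicitly identify $C_{\max}(S)$ for every face $S\in\Sigma(\DKK(G,F))$. That the Hasse diagram of $\scrL_{G,F}$ coincides with the dual graph of $\DKK(G,F)$ is immediate from the definitions, since each cover relation in $\scrL_{G,F}$ is a single ccw rotation exchanging one route of a clique for another and therefore corresponds to two maximal cliques that share a facet. The substantive content is the existence and uniqueness of $C_{\max}(S)$ together with the saturated-chain condition.

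A face $S$ is a set of pairwise coherent routes, so by Theorem~\ref{thm:cliquebijection} the cliques containing $S$ correspond to a subset $\calV(S)\subseteq\calV(H)$ of clique vectors satisfying explicit constraints read off from Definition~\ref{def:cliquemap}: each route of $S$ through $\beta_{i,j}$ pins down (in)equalities between $a_i$ and $j$, between $a_j$ and $i$, and in some situations the sign $a_{i,j}$. I would then define $\ba_{\max}\in\calV(S)$ by pushing every unconstrained entry as far ``counterclockwise'' as possible: for $i\in\{1,\dots,n\}$, set a free $a_i$ to the smallest admissible neighbor in $N_H(i)$; for $j\in\{n+1,\dots,n+m\}$, set a free $a_j$ to the largest; and set every free sign $a_{i,j}$ equal to $+$. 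A direct check against Definition~\ref{def:cliquevector} shows $\ba_{\max}$ is well-defined, and $C_{\max}(S):=\phi(\ba_{\max})$ contains $S$ by construction.

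The main step is to show that for any $\ba\in\calV(S)$ with $\ba\neq\ba_{\max}$, there exists a cover $\phi(\ba)\lessdot\phi(\bb)$ in $\scrL_{G,F}$ with $\bb\in\calV(S)$. I would pick any coordinate on which $\ba$ disagrees with $\ba_{\max}$ and apply the matching cover from Proposition~\ref{prop:cover}: condition (1) or (4) when the disagreement is in some $a_i$ with $i\in\{1,\dots,n\}$, condition (2) or (4) when it is in some $a_j$, and condition (3) when the only disagreement is a sign $a_{i,j}$. Iterating yields a chain $\ba=\ba^{(0)}\lessdot\ba^{(1)}\lessdot\cdots\lessdot\ba^{(k)}=\ba_{\max}$ whose image under $\phi$ is a saturated chain in $\scrL_{G,F}$ with every term containing $S$. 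Uniqueness of $C_{\max}(S)$ then follows automatically from antisymmetry of the partial order: any other candidate would admit such an ascending chain from and to $\ba_{\max}$, forcing equality.

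The main obstacle will be handling cover relation (4), where a sign $a_{i,j}$ and one of $a_i,a_j$ change simultaneously and the three routes on $\beta_{i,j}$ described in case (iii) of Definition~\ref{def:cliquemap} are reshuffled. We must verify that the particular routes of $S$ supported on $\beta_{i,j}$ lie among the two routes preserved by the rotation rather than being the single route that is swapped out. This reduces to the observation that the extreme routes $\alpha_{1,i}\beta_{i,j}\gamma_{j,1}$ and $\alpha_{2,i}\beta_{i,j}\gamma_{j,2}$ are always present in case (iii) irrespective of the sign, so the sign $a_{i,j}$ is forced by $S$ precisely when $S$ contains one of the middle routes, and otherwise the sign can be freely adjusted without disturbing $S$.
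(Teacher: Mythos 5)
Your proposal takes essentially the same route as the paper: identify for each face $S$ the clique vector $\ba_{\max}$ (the paper calls it $\ba^S$) obtained by pushing $a_i$ to $\min_S(i)$ for $i$ on the left shore, $a_j$ to $\max_S(j)$ for $j$ on the right shore, and setting free signs to $+$, then use Proposition~\ref{prop:cover} to walk any other $\ba\in\calV(S)$ up to $\ba_{\max}$ via moves that never disturb the routes of $S$. One small remark: in handling the sign coordinate you are in fact slightly more careful than the paper's wording, which sets $a^S_{i,j}:=+$ whenever $i=\max_S(j)$ and $j=\min_S(i)$ without explicitly carving out the case where $S$ already contains the middle route $\alpha_{1,i}\beta_{i,j}\gamma_{j,2}$, which forces the sign to $-$; your closing paragraph (the extreme routes are sign-independent, so the sign is forced precisely when $S$ contains a middle route and is free otherwise) is the right way to fill that in.
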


\begin{proof}
    Suppose that $S$ is a set of coherent routes in $G(H)$ and that $S\subseteq \phi(\ba)$ for a clique vector $\ba$.
    We begin by observing that for each $i\in \{1,2,\ldots,n+m\}$, the routes in $S$ lead to values $\min_S(i)$ and $\max_S(i)$ such that 
    \[
    \mathrm{min}_S(i)\leq a_i\leq \mathrm{max}_S(i) \, .
    \]
    There are four ways that these bounds on $a_i$ are induced by routes in $S$.
    First, suppose $S$ contains the route $\alpha_{2,i}\beta_{i,j}\gamma_{j,2}$.
    Then we must have $a_i\leq j$ and $a_j\leq i$, as otherwise $\phi(\ba)$ would contain a conflicting route.
    Second, and by an identical argument, if $S$ contains $\alpha_{2,i}\beta_{i,j}\gamma_{j,1}$, then $a_i\leq j$ and $a_j\geq i$, and further if both $a_i=j$ and $a_j=i$, then $a_{i,j}=+$.
    Third, if $S$ contains $\alpha_{1,i}\beta_{i,j}\gamma_{j,2}$, then $a_i\geq j$ and $a_j\leq i$, and further if both $a_i=j$ and $a_j=i$, then $a_{i,j}=-$.
    Fourth and finally, note that if $S$ contains $\alpha_{1,i}\beta_{i,j}\gamma_{j,1}$, then $a_i\geq j$ and $a_j\geq i$.
    Thus, there exists upper and lower bounds on each $a_i$ for $i\in\{1,2,\ldots,n+m\}$.
    Further, observe that in the second and third cases above, conditions on the signs of $a_{i,j}$ are forced by the routes in $S$ when $a_i=j$ and $a_j=i$.

    We next define a clique vector $\ba^S$ and prove that it satisfies $\phi(\ba^S)=C_{max}(S)$ with all the corresponding properties of the hypotheses of Lemma~\ref{lem:disjointunion}.
    For each $i\in \{1,2,\ldots,n\}$, set $a^S_i:=\min_S(i)$.
    For each $j\in \{n+1,n+2,\ldots,n+m\}$, set $a^S_j:=\max_S(j)$.
    If we further have
    \[
    i=\mathrm{max}_S(j) \text{ and } j=\mathrm{min}_S(i) \, ,
    \]
    then $a^S_{i,j}:=+$.

    Now suppose that $\ba'$ is a clique vector such that $S\subseteq \phi(\ba')$.
    Then we know for every $i\in \{1,2,\ldots,n+m\}$ we have
    \[
    \mathrm{min}_S(i)\leq a'_i\leq \mathrm{max}_S(i) \, .
    \]
    Further, we know that for every $j$ such that $\mathrm{min}_S(i)< j < \mathrm{max}_S(i)$, there is no route in $S$ supported on the edge $\beta_{i,j}$, as this would lead to a contradiction to the max and min values of $a_i$ via the argument given in the first paragraph above.
    Therefore, we are freely able to adjust the entries of $\ba'$ as described in the four conditions of Proposition~\ref{prop:cover}. 
    By iteratively decreasing the values $a'_i$ and increasing the values $a'_j$, applying conditions \emph{(3)} and \emph{(4)} as needed during this process, a sequence of clique vectors is produced that terminates at $\ba^S$.
    There are no routes in $S$ that cause conflicts with any of the maximal cliques corresponding to these clique vectors, as there are no routes to conflict with along those edges.
    This produces the desired sequence of cover relations in the framing lattice.
    Since each of these saturated chains in the framing lattice terminates at $\phi(\ba^S)$, this completes the proof.
    \end{proof}

\begin{corollary}\label{cor:coverings of a clique}
    Let $G = G(H)$ and let $F$ be the canonical bipartite framing for $G$.
    Given a clique $C\in\scrL_{G,F}$ , we have 
    \[
    \cov_{\scrL_{G,F}}(C)=|\psi\circ\phi^{-1}(C)| \, ,
    \]
    i.e., the number of cliques covering $C$ is equal to the number of edges in the matching in $W(H)$ corresponding to $C$.
\end{corollary}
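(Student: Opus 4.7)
The plan is to construct an explicit bijection between the set of clique vectors $\bb$ with $\phi(\ba) \lessdot \phi(\bb)$ and the edges of the matching $M = \psi(\ba)$, where $\ba = \phi^{-1}(C)$. The four cases in Proposition~\ref{prop:cover} classify the covers of $\phi(\ba)$, while the four cases in Definition~\ref{def:matchingmap} classify the edges of $M$, so I would show that these correspond case by case.

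First, I would match Proposition~\ref{prop:cover}(1) with Definition~\ref{def:matchingmap}(1): a cover of type (1) exists for a given $i\in\{1,\dots,n\}$ with $a_i=j$, $a_j\neq i$, if and only if $b_i$---the largest neighbor of $i$ less than $j$---exists, i.e., if and only if $j$ is not the smallest neighbor of $i$. This is precisely the condition for the whisker vertex $w_{i,j}$ to be present in $W(H)$, and hence equivalent to the presence of the edge $iw_{i,j}$ in $M$. The matching of Proposition~\ref{prop:cover}(2) with Definition~\ref{def:matchingmap}(2) is symmetric, replacing ``largest smaller neighbor'' by ``smallest larger neighbor.''

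Next, cases (3) and (4) concern edges $ij\in E(H)$ for which $a_i=j$ and $a_j=i$. When $a_{i,j}=-$, the unique cover obtained by flipping to $b_{i,j}=+$ (Proposition~\ref{prop:cover}(3)) pairs with the single edge $ij\in M$ from Definition~\ref{def:matchingmap}(3). When $a_{i,j}=+$, Proposition~\ref{prop:cover}(4) yields one cover for each choice of either $b_i$ (the largest neighbor of $i$ less than $j$) or $b_j$ (the smallest neighbor of $j$ greater than $i$); these two sub-options are distinct and independently feasible. The existence of $b_i$ is equivalent to the existence of $w_{i,j}$ in $W(H)$, and the existence of $b_j$ is equivalent to the existence of $w_{j,i}$, so the number of type-(4) covers associated with this $(i,j)$ equals the number of edges contributed to $M$ by Definition~\ref{def:matchingmap}(4) for this pair.

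Summing the four bijections over all $i$, $j$, and relevant edges $ij$ yields $\cov_{\scrL_{G,F}}(C)=|M|=|\psi\circ\phi^{-1}(C)|$. The main obstacle is simply the careful bookkeeping of the ``if they exist'' clauses in Definition~\ref{def:matchingmap} against the existence conditions for $b_i$ and $b_j$ in Proposition~\ref{prop:cover}; once one observes that, for $i\in S(H)$, the whisker $w_{i,j}$ exists in $W(H)$ precisely when $i$ has a neighbor smaller than $j$ (and symmetrically for $i\in T(H)$), the enumeration becomes a direct case check.
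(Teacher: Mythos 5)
Your proposal is correct and takes essentially the same approach as the paper: both set up a direct bijection between the edges of $\psi(\ba)$ and the clique vectors $\bb$ with $\phi(\ba)\lessdot\phi(\bb)$ by pairing the four cases of Proposition~\ref{prop:cover} with the four cases of Definition~\ref{def:matchingmap}, hinging on the observation that the ``if it exists'' clauses for whisker vertices line up exactly with the existence conditions on $b_i$ and $b_j$. The paper organizes the argument by iterating over edges of the matching and identifying the unique associated cover action, rather than iterating over cover types, but this is a cosmetic difference.
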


\begin{proof}
    Let $\phi(\ba)$ be a clique with corresponding matching $\psi(\ba)$ of $W(H)$.
    We will prove this corollary by showing how each cover relation listed in Proposition~\ref{prop:cover} can be associated to performing a unique ``action'' on a specific edge of $\psi(\ba)$. 
    
    Suppose $iw_{i,j} \in \psi(\ba)$ for some $i = 1,\dots,n$.
    There are two possibilities: $a_i = j$ and $a_j \neq i$, or $a_i = j$, $a_j = i$, and $a_{i,j} = +$.
    First, if $a_j \neq i$, then let $j'$ be the largest neighbor of $i$ that is less than $j$.
    If $j'w_{i,j'}$ is an edge of $\psi(\ba)$ as well, then let $\psi(\bb)$ be the matching that deletes $iw_{i,j}$ and $j'w_{i,j'}$ and adds $ij'$.
    Then $\phi(\ba) \lessdot \phi(\bb)$ since $\ba$ and $\bb$ satisfy Proposition~\ref{prop:cover}~\emph{(1)}. 
    If $j'w_{i,j'}$ is not an edge in $\psi(\ba)$, then replace $iw_{i,j}$ with $iw_{i,j'}$ if possible; if not, delete $iw_{i,j}$.
    If $iw_{i,j'}$ exists, then the resulting matching, $\psi(\bb)$, has a corresponding clique $\phi(\bb)$ covering $\phi(\ba)$ since $\ba$ and $\bb$ again satisfy Proposition~\ref{prop:cover}~\emph{(1)}.
    The only way $iw_{i,j'}$ cannot exist is if $j'$ and $j$ are the lowest-valued neighbors of $i$.
    Then 
    \[
        \phi(\ba) \lessdot \phi(\bb) = (\phi \circ \psi^{-1})(\psi(\ba) \setminus iw_{i,j})
    \]
    because $\ba$ and $\bb$ once more satisfy Proposition~\ref{prop:cover}~\emph{(1)}.

    Now, suppose $a_j = i$ and $a_{i,j} = +$.
    There is then exactly one way to produce a clique covering $\phi(\ba)$ whose corresponding matching modifies $iw_{i,j}$: replace $a_{i,j} = +$ with $-$ and, if possible, replace $iw_{i,j}$ with $iw_{i,j'}$ where $j'$ is the largest neighbor of $i$ that is smaller than $j$; otherwise, delete $iw_{i,j}$.
    Then the clique vectors associated to $\psi(\ba)$ and the resulting matching satisfy Proposition~\ref{prop:cover}~\emph{(4)}.

    We have shown thus far that if $\psi(\ba)$ contains an edge $iw_{i,j}$, then there is exactly one way to associate a clique $\phi(\bb)$ covering $\phi(\ba)$ by performing an action on $iw_{i,j}$.
    An analogous argument holds when considering edges of the form $jw_{j,i}$ for $j \in \{n+1,\dots,n+m\}$.
    In this case, we use Proposition~\ref{prop:cover}~\emph{(2)} or~\emph{(4)}.
    Note that if an action on $iw_{i,j}$ results in deleting an edge $j'w_{j',i}$, then performing an action on $j'w_{j',i}$ cannot result in deleting $iw_{i,j}$, so there is indeed just one action associated to each edge. 
    
    Finally, if $ij \in \psi(\ba)$, then $a_i = j$, $a_j = i$, and $a_{i,j} = -$.
    By Proposition~\ref{prop:cover}~\emph{(3)}, the clique $\phi(\ba)$ is covered by the matching corresponding to the clique vector that replaces $a_{i,j} = -$ with $+$.

    By examining each of the conditions listed in Proposition~\ref{prop:cover}, one may verify that each of the cover relations describes exactly one of the above actions on an edge of $\psi(\ba)$.
    The conclusion of the corollary follows.
\end{proof}

The following corollary is directly implied by Corollary~\ref{cor:coverings of a clique}.

\begin{corollary}\label{cor:coveringsequalsmatchings}
    Let $G = G(H)$ and let $F$ be the canonical bipartite framing for $G$.
    The number of cliques in $\scrL_{G,F}$ that are covered by $k$ other cliques is the number of $k$-matchings in $W(H)$.
\end{corollary}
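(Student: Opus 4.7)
The plan is to deduce this corollary as an immediate consequence of the two previously established results: Corollary~\ref{cor:cliquematchingbijection}, which provides the bijection $\psi \circ \phi^{-1}$ between $\cliques(H)$ and $\calM(H)$, and Corollary~\ref{cor:coverings of a clique}, which asserts that for each clique $C \in \scrL_{G,F}$, the number of cliques covering $C$ in the framing lattice equals the number of edges in the corresponding matching $\psi\circ\phi^{-1}(C) \in \calM(H)$.

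First I would partition $\cliques(H)$ according to the covering number: let $\cliques_k(H) := \{C \in \cliques(H) : \cov_{\scrL_{G,F}}(C) = k\}$ and let $\calM_k(H)$ denote the set of $k$-matchings of $W(H)$. Then by Corollary~\ref{cor:coverings of a clique}, for each $C \in \cliques(H)$, we have $C \in \cliques_k(H)$ if and only if $|\psi\circ\phi^{-1}(C)| = k$, i.e., if and only if $\psi\circ\phi^{-1}(C) \in \calM_k(H)$.

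Next I would observe that since $\psi\circ\phi^{-1}$ is a bijection between $\cliques(H)$ and $\calM(H)$ by Corollary~\ref{cor:cliquematchingbijection}, its restriction to $\cliques_k(H)$ yields a bijection onto $\calM_k(H)$. Taking cardinalities gives $|\cliques_k(H)| = |\calM_k(H)|$, which is precisely the claim. There is no genuine obstacle here, as the work has already been done: this corollary is a bookkeeping consequence of the refined bijection, restated at the level of the grading by covering number (equivalently, matching size). The only care needed is to make sure the statement is phrased correctly so that it can be combined with Corollary~\ref{cor:hstarcovering} in the subsequent proof of Theorem~\ref{thm:matchingequalshstar}, where summing $z^{\cov(C)}$ over $C \in \cliques(H)$ will yield exactly $\mu(W(H);z)$.
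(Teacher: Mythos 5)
Your proof is correct and follows the same route as the paper, which simply notes that the corollary is directly implied by Corollary~\ref{cor:coverings of a clique} (together with the bijection from Corollary~\ref{cor:cliquematchingbijection}). Your explicit partition-and-restrict argument is just a more spelled-out version of the paper's one-line deduction.
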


We conclude this work with the proof of Theorem~\ref{thm:matchingequalshstar}.

\begin{proof}[Proof of Theorem~\ref{thm:matchingequalshstar}]
    Let $\calT$ be the DKK triangulation of $\calF_1(G)$ induced from $\scrL_{G,F}$, and let $\calT^{\star}$ be the half-open triangulation described in Lemma~\ref{lem:disjointunion}.
    By Corollary~\ref{cor:hstarcovering},
    \[
        h^*(\calF_1(G);z) = \sum_{\Delta \in \calT^{\star}} z^{\cov(\Delta)} = \sum_{k=0}^{n+m} \sum_{\substack{\Delta \in \calT^{\star} \\ \cov(\Delta) = k}} z^k.
    \]
    By Corollary~\ref{cor:coveringsequalsmatchings}, the coefficient of $z^k$ will be the number of matchings of $W(H)$ of size $k$, completing the proof.
\end{proof}

\bibliographystyle{plain}
\bibliography{refs}

\end{document}